\documentclass[a4paper,12pt]{article}
\usepackage{amssymb,amsthm,amsmath,amsfonts}
\usepackage{graphicx}
\usepackage[left=2.5cm,right=2.5cm,top=2cm,bottom=2cm]{geometry}
\usepackage{cite}
\usepackage{color}

 \newtheorem{thm}{Theorem}[section]
 \newtheorem{cor}[thm]{Corollary}
 
 \newtheorem{prop}[thm]{Proposition}
 \theoremstyle{definition}
 \newtheorem{defn}[thm]{Definition}
 \theoremstyle{remark}
 \newtheorem{rem}[thm]{Remark}
 \newtheorem{ex}[thm]{Example}
 \numberwithin{equation}{section}

\begin{document}

\title{A connection between quaternionic slice regular and Fueter hyperholormorphic functions}
\small{
\author{
Jos\'e Oscar Gonz\'alez-Cervantes $^{1}$, Daniel Gonz\'alez-Campos $^{1}$,\\ Juan Bory-Reyes $^{2}$, Baruch Schneider $^{(3){\footnote{corresponding author}}}$}
\vskip 1truecm
\date{\small $^{1}$ Departamento de Matem\'aticas, ESFM-Instituto Polit\'ecnico Nacional. 07338, Ciudad M\'exico, M\'exico\\ Email: jogc200678@gmail.com, daniel$\_$uz13@hotmail.com\\
 {$^{2}$  SEPI-ESIME-IPN-Zacatenco, Instituto Polit\'ecnico Nacional, 07738, Ciudad M\'exico, M\'exico \\ Email: juanboryreyes@yahoo.com\\ $^{(3)}$ Department of Mathematics, University of Ostrava. 70103,
Ostrava, Czech Republic.\\ Email: baruch.schneider@osu.cz}
}

\maketitle

\begin{abstract} 
\small{
This paper presents some results for the slice regular functions and hyperholomorphic functions theories, as consequences of new relationships between the global operator $G$, Fueter operator $\mathcal D$ and Laplacian operator $\Delta_{\mathbb{R}^{4}}$. Although our results are presented in the quaternionic setting, a fine interpretation in terms of vector calculus of the slice regular function theory yields an interesting relationship between this theory and harmonic analysis. 
Moreover, we introduce and study a submodule  of the hyperholomorphic functions associated to the unit ball $\mathbb B^4(0,1)$ given in terms of new series expansions of the functions.
}
\end{abstract}

\noindent 
\textbf{Keywords:} Quaternionic slice regular function theory, Quaternionic hyperholormorphic function theory, Non-constant  coefficient differential operator, Fueter operator,   Laplacian Operator,  Stokes formula,  Borel$-$Pompeiu   formula, Cauchy formula, Splitting Lemma, Representation Theorem.\\
\textbf{Math Subject Classification (2020):} 30G35, 46G10.

\section{Introduction}
The classical function theory for complex-valued functions is based on the so called Cauchy-Riemann operator. In the case of quaternion-valued functions, the theory of null-solutions of the Fueter operator, associated with the standard base of $\mathbb{R}^{4}$, is a widely studied theory, see \cite{F1, F2}. In addition, the Fueter operator factorizes the Laplacian in $\mathbb{R}^{4}$, see \cite{shapiro1,shapiro2,VSMT}. Associated to Fueter operators, quaternionic Stokes, Borel-Pompeiu, and Cauchy integral formulas have been obtained. 

The slice regular function theory is a natural extension of the holomorphic function theory using power series expansions of functions. These functions are often called slice regular when they are quaternion-valued, while when they are Clifford algebra-valued, they are called slice monogenic. 

The importance of the theory of quaternionic slice regular functions lies in its direct interpretation through quaternionic series expansions associated to the four-dimensional unit ball.

The slice regular function theory was introduced by Gentili in 2002; see  \cite{GS1,GS2}. This function theory study restricted functions in domains $\Omega\subseteq\mathbb H$ intercepted with copies of the complex plane $\Omega_{{\bf{I}}}=\Omega \cap  \mathbb{C}({\bf{I}})$,   for all  ${\bf{I}}$  {unit  vectors in} $ \mathbb{H}$ in which  the classical operator of Cauchy Riemann  {is} used   and  some   important   results   can be proved in \cite{gp, GP_2, gssbook, cgs,gpr}.   

In \cite{GP_2, GlobalOp}, the global operators $\vartheta$ and $G$ related to the slice regular functions theory were introduced.} In  \cite{G, GG1, GG2},   Stokes, Cauchy, and Borel-Pompeiu formulas induced by the global operator $G$ were presented.  Furthermore, some identities between the global operator $G$ and Moisil$-$Theodorescu and Fueter operators were proved. 
  
As consequences of some quaternionic differential equations involving $G$, Fueter, and Laplacian operators, significant properties on {the}  slice regular function and hyperholomorphic function theories emerge.
  
The order of this work is as follows: Section \ref{prelimi} presents a brief summary of the introductory concepts of  hyperholomorphic function theory and  of slice regular function theory. Subsection \ref{NuyD} shows some   differential equations in which our important quaternionic differential operators are presented. Meanwhile, Subsection \ref{srfunction} and Subsection \ref{hftheor} present our main results about the slice regular function  and hyperholomorphic {function}  theories, respectively.

\section{Preliminaries}\label{prelimi}
 
The skew field of real quaternions $\mathbb H$ is formed by  $x=x_0+x_{1} {\bf e}_1+x_{2} {\bf e}_2+x_{3} {\bf e}_3$,  where $x_{k}\in \mathbb R$ for  $k= 0,1,2,3$ and the quaternionic  units  satisfy $ {\bf e}_k^{2} =-1$, for   $k=1,2,3$, and $ {\bf e}_1 {\bf e}_2  {\bf e}_3 =-1  $. In addition,  $x_0$   is the scalar part, and ${\bf x}= x_{1} {\bf e}_1+x_{2} {\bf e}_2+x_{3} {\bf e}_3\mapsto (x_{1},x_{2}, x_{3} )$ is the vector part of $x$. 

{Given $x=x_0+{\bf x} , \  y=y_0+{\bf y} \in \mathbb H$ one can see that the quaternionic product allows to see the following identity: 
\begin{align}\label{Hproduct}
xy=& x_0y_0+ x_0 {\bf y}+ y_0{\bf x}- \langle {\bf x}, {\bf y}\rangle_{\mathbb R^3}+ 
 [ {\bf x}, {\bf y} ]_{\mathbb R^3},
\end{align}
where $x_0 {\bf y}$ and $y_0{\bf x}$ are  products by scalar.  In addtion,  
$\langle {\bf x}, {\bf y}\rangle_{\mathbb R^3}$  and 
$ [ {\bf x}, {\bf y} ]_{\mathbb R^3}$
 are the    scalar and vector product in $ \mathbb R^3 $, respectively. Therefore, 
 the   vector product   of ${\bf x}, {\bf y}\in \mathbb R^3$ is }  
$$[{\bf x}, {\bf y}]_{\mathbb R^3} = \frac{1}{2}({\bf x}{\bf y}- {\bf y}{\bf x}).$$
 
For $x\in \mathbb H$ the mapping of quaternionic conjugation is given by $x\rightarrow {\overline x}:=x_0-x_{1}{\bf e}_1-x_{2}{\bf e}_2-x_{3}{\bf e}_3$. It is easily seen that $x\,{\overline x}={\overline x}\,x=x^{2}_{0}+x^{2}_{1}+x^{2}_{2}+x^{2}_{3}$ and  $\overline { xy}={\overline x}\,\,{\overline y}$ for {$ x, y \in \mathbb H$.} 

The quaternionic scalar product  of {$ x, y \in\mathbb H$} is given by 
$$\langle x,y\rangle:=\frac{1}{2}(\bar x y + \bar y x) = \frac{1}{2}(x\bar y + y  \bar x).$$ 

\begin{defn}
The open unit  ball and the unit sphere  in $\mathbb H$ are   denoted by 
$\mathbb B^4(0,1):=\{  x \in \mathbb H \ \mid \  \| x \| < 1\}$ and     
$ \mathbb S^3: = \{     x\in \mathbb  H \ \mid \ \| x\|=1 \}$, respectively.  
 In addition, the unit sphere  in $\mathbb R^3$ is given  by 
  $ \mathbb S^2 := \{    {\bf x} \in \mathbb R^3 \ \mid \ \|  {\bf x} \|=1 \}$. 
\end{defn}

The $\mathbb H$-valued functions $ f$ defined on a  domain $\Omega\subset\mathbb H\cong \mathbb R^4$ are represented by $f=\sum_{k=0}^3 f_k {\bf e}_k$, where $f_k$  is an $\mathbb R$-valued function defined on $\Omega$ for $k=0,1,2,3$. Properties, such as continuity, differentiability, integrability, and so on, that are ascribed to $f$ have to be possessed by all  components $f_k$. We will follow standard notation; for example, $C^{1}(\Omega, \mathbb H)$ denotes the set of continuously differentiable $\mathbb H$-valued functions defined on $\Omega$. 
 
\subsection{A brief summary of hyperholomorphic function theory}\label{hyper}
We recall some basic facts on hyperholomorphic quaternionic-valued functions, see \cite{shapiro1, shapiro2, sudbery}. 
\begin{defn}The left- and the right-Fueter operators are defined by   
$$ \mathcal D[ f] := \sum_{k=0}^3{\bf e}_k \partial_k   f 
 \ \ \textrm{   and   } \ \  \mathcal  D_r[ f  ] :=  \sum_{k=0}^3 \partial_k {  f } {\bf e}_k,\quad 
  \forall f \in C^1(\Omega,\mathbb H),$$ respectively, where $\partial_k   f =\displaystyle \frac{\partial  f}{\partial x_k}$ for all $k$. 
    \end{defn}
The non-commutativity of the quaternionic product implies that  the operators  $\mathcal D$ and $ \mathcal  D_r$ are  generalizations of the Cauchy-Riemann operator. The phenomenon of duality is quite well known in quaternionic analysis, and this is why the function theory induced by $\mathcal D_r$ is not studied  due to its deep relationship and similarity with the hyperholomorphic function theory induced by $\mathcal D$. The quaternionic left-linear module of right-hyperholomorphic function  is defined by $\mathcal M_r (\Omega)= Ker(\mathcal{D}_r)\cap C^1(\Omega, \mathbb H)$.
 
\begin{defn}
Let a domain $\Omega\subset \mathbb H$, then the quaternionic right-linear module of hyperholomorphic function  is defined by 	 $\mathcal M (\Omega)= Ker(\mathcal{D})\cap C^1(\Omega, \mathbb H)$.
Given $f\in C^{1}(\Omega, \mathbb{H})$, and denote $f= f_0 + {\bf f}$. {Then, using the formula 
\eqref{Hproduct} we see that }
 \begin{align*}
 \mathcal{D} [f]=\partial_0 f_0 +\textrm{Grad}[f_0]-\textrm{Div}[{\bf f}]+\textrm{Rot}[{\bf f}],
 \end{align*} 
where
\begin{align*}
\textrm{Grad}[f_0]=& \sum_{k=1}^{3}{\bf e}_{k}\partial_k f_0,\qquad \textrm{Div}[{\bf f}]=\sum_{k=1}^{3} {\bf e}_{k}\partial_k f_k,
\\
\textrm{Rot}[{\bf f}]=&	\bigg(\partial_{2}f_3-\partial_{3}f_2\bigg) {\bf e}_{1}-\bigg(\partial_{1}f_3-\partial_{3} f_1\bigg){\bf e}_{2}+\bigg(\partial_{1} f_2-\partial_{2}f_1\bigg){\bf e}_{3}
\end{align*}
are the well known vector differential operators called gradient, divergence and rotational, respectively.
\end{defn}

\begin{thm} Given a domain  $\Omega\subset \mathbb H$ such that $\partial \Omega$   is a 3-dimensional smooth   surface.   Borel$-$Pompeiu formula shows that 
\begin{align}\label{BorelHyp}  &  \int_{\partial \Omega}(K (y-x)\sigma_{y}   f (y)  +   g(y)   \sigma_{y}  K (y-x) ) \nonumber  \\ 
&  - 
\int_{\Omega} (K  (y-x)  \mathcal D [f] (y) +  \mathcal  D_r [g] (y) K  (y-x)
     )dy   \nonumber \\
		=  &  \left\{ \begin{array}{ll}  f(x) + g(x) , &  x\in \Omega,  \\ 0 , &  x\in \mathbb H\setminus\overline{\Omega},                  
\end{array} \right. 
\end{align} 
and  Stokes' formula shows that 
\begin{align}\label{StokesHyp} \int_{\partial \Omega} g\sigma _x f =  &   \int_{\Omega } \left( g 
  \mathcal  D[f] +  \mathcal  D_r[g] f \right)dx,
\end{align}
for all $f,g \in C^1(\overline{\Omega}, \mathbb H)$, where  $dx$ denotes the differential form of the 4-dimensional volume in $\mathbb R^4$ and  
$$\sigma _{x}:=\left( \sum_{k=0}^3 (-1)^k {\bf e}_k d\hat{x}_k\right)$$ 
is the quaternionic differential form of the 3-dimensional volume in $\mathbb R^4$, where $d\hat{x}_k  = dx_0 \wedge dx_1\wedge dx_2  \wedge  dx_3 $ omitting factor $dx_k$.      {Note that $|\sigma _{x}| = dS_3$  is the differential} form of the 3-dimensional volume in 
$\mathbb R^4$. Let us recall that  
  {$$ K (y- x)=\frac{1}{2\pi^2} \frac{ \overline{y  - x }}{|y  - x |^4}, \quad \forall  y\in \mathbb H\setminus\{ x \}$$}
is called hyperholomorphic Cauchy Kernel. In addition, the integral operator 
$$ \mathcal T[f](x) = \int_{\Omega} K  (y-x) f  (y) dy, \quad \forall f\in L_2(\Omega,\mathbb H)\cup C(\Omega,\mathbb H)$$ satisfies 
\begin{align}\label{FueterInv} \mathcal D \circ \mathcal T[f]=f, \ \ \forall f\in L_2(\Omega,\mathbb H)\cup C(\Omega,\mathbb H). 
\end{align} 
\end{thm}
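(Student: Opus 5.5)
The plan is to prove Stokes' formula \eqref{StokesHyp} first and then obtain the Borel$-$Pompeiu formula \eqref{BorelHyp} from it by cutting out a small ball around the singularity of the kernel. The identity \eqref{FueterInv} will then follow from the Borel$-$Pompeiu formula.

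\textbf{Stokes' formula.} Consider the $\mathbb H$-valued $3$-form $\omega = g\,\sigma_x f$. Since $\sigma_x=\sum_k (-1)^k{\bf e}_k\,d\hat x_k$, we have
$$\omega=\sum_{k=0}^3 (-1)^k g\,{\bf e}_k f\, d\hat x_k .$$
Using $d(d\hat x_k)=0$ and $dx_k\wedge d\hat x_k=(-1)^k dx$, this gives
$$d\omega=\sum_k \partial_k(g{\bf e}_kf)\,dx=\sum_k\big((\partial_k g){\bf e}_k f+g{\bf e}_k\partial_k f\big)dx=\big(\mathcal D_r[g]f+g\mathcal D[f]\big)dx .$$
Here the order of the quaternionic factors is kept throughout, and the real scalar $\partial_k$ passes through the constant ${\bf e}_k$. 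Applying the classical Stokes theorem componentwise to the real components of $\omega$ on $\overline\Omega$ yields \eqref{StokesHyp}.

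\textbf{Borel$-$Pompeiu formula.} First verify by direct differentiation that $K(\cdot-x)$ is both left and right hyperholomorphic off $x$: $\mathcal D_r[K(\cdot-x)]=0$ and $\mathcal D[K(\cdot-x)]=0$. This holds because $\overline{y}/|y|^4$ is, up to a constant, $\mathcal D_r$ (respectively $\mathcal D$) applied to the fundamental solution $|y|^{-2}$ of $\Delta_{\mathbb R^4}$, and $\mathcal D\overline{\mathcal D}=\Delta$.

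For $x\notin\overline\Omega$, apply \eqref{StokesHyp} twice. The first application uses the pair $(K(\cdot-x),f)$, in which $\mathcal D_r[K]=0$. The second uses the pair $(g,K(\cdot-x))$, in which $\mathcal D[K]=0$. Adding the two identities gives $0$.

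For $x\in\Omega$, apply the same two identities on $\Omega_\varepsilon=\Omega\setminus\overline{B(x,\varepsilon)}$. This produces the stated integrals minus the corresponding integrals over the sphere $|y-x|=\varepsilon$.

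\textbf{The main obstacle: the limit on the small sphere.} On that sphere the outward normal form is $\sigma_y=\frac{y-x}{\varepsilon}\,dS_3$. Hence
$$K(y-x)\sigma_y=\frac{1}{2\pi^2}\frac{\overline{(y-x)}(y-x)}{\varepsilon^5}\,dS_3=\frac{dS_3}{2\pi^2\varepsilon^3},$$
and the same computation gives $\sigma_yK(y-x)=\frac{dS_3}{2\pi^2\varepsilon^3}$. Since $|\mathbb S^3|=2\pi^2$ and $f,g$ are continuous, the sphere integrals tend to $f(x)+g(x)$ as $\varepsilon\to0$. The volume integrals converge because $|K(y-x)|\sim|y-x|^{-3}$ is integrable in $\mathbb R^4$. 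The delicate point is the orientation and sign bookkeeping in $\sigma_y$ on the inner boundary.

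\textbf{The inversion formula \eqref{FueterInv}.} For $f\in C^1$ with compact support in $\Omega$, the Borel$-$Pompeiu formula with $g=0$ gives $\mathcal T[\mathcal D f]=-f$. The convention in $\mathcal T$ is $K(y-x)$ with the integral in $y$, and this orientation has to be tracked carefully to get the sign. The identity $\mathcal D\mathcal T f=f$ then follows in the distributional sense by duality, using the right-hyperholomorphy of the kernel. For continuous or $L_2$ data we conclude by density, since $\mathcal T$ is a weakly singular operator, bounded on $L_2(\Omega)$ for bounded $\Omega$. This last step is standard; I expect the small-sphere limit and its sign conventions to be the main technical care needed.
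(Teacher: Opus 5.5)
The paper gives no proof of this theorem. It is recalled from the standard hyperholomorphic literature, so your argument has to be judged on its own. Your proofs of Stokes' formula and of the Borel--Pompeiu formula are the standard ones and are correct. For Stokes you use $d(g\,\sigma_x f)=(\mathcal D_r[g]f+g\,\mathcal D[f])\,dx$; for Borel--Pompeiu you excise $B(x,\varepsilon)$ and use $K(y-x)\frac{y-x}{\varepsilon}=\frac{1}{2\pi^2\varepsilon^3}$ on the small sphere. The signs in \eqref{BorelHyp} come out exactly as stated. One small slip: $\overline{y}/|y|^4=-\tfrac12\,\overline{\mathcal D}\bigl[|y|^{-2}\bigr]$, i.e.\ it is the \emph{conjugate} operator applied to $|y|^{-2}$, not $\mathcal D$ or $\mathcal D_r$. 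Then $\mathcal D\overline{\mathcal D}=\Delta_{\mathbb R^4}$ and its right-handed analogue give two-sided hyperholomorphy of the kernel off $x$, as you intended.

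The genuine gap is the final step. You correctly derive $\mathcal T[\mathcal D f]=-f$ for compactly supported $f\in C^1$. But duality then gives $\mathcal D\circ\mathcal T=-I$, not $I$. For a test function $\varphi$, integrate by parts and apply the right-hand Borel--Pompeiu formula together with $K(y-x)=-K(x-y)$:
\[
\int_\Omega \varphi\,\mathcal D[\mathcal T h]\,dx=-\int_\Omega\Big(\int_\Omega \mathcal D_r[\varphi](x)\,K(y-x)\,dx\Big)h(y)\,dy=-\int_\Omega\varphi(y)\,h(y)\,dy .
\]
The two identities cannot coexist. $\mathcal D\mathcal T=I$ together with $\mathcal T\mathcal D=-I$ on $C_0^\infty(\Omega,\mathbb H)$ would give $\mathcal D\varphi=\mathcal D\mathcal T[\mathcal D\varphi]=-\mathcal D\varphi$ for every test function. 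A concrete check: \eqref{BorelHyp} with $f(y)=y_0$ and $g=0$ gives $\mathcal T[1]=h-x_0$ with $h$ left hyperholomorphic in $\Omega$, so $\mathcal D\mathcal T[1]=-1$. On $\mathbb B^4(0,1)$ one finds explicitly $\mathcal T[1](x)=-\tfrac14\overline{x}$.

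So \eqref{FueterInv} is false with the kernel orientation $K(y-x)$ fixed in the statement, and no careful sign tracking will produce it. It holds for $\mathcal T[f](x)=-\int_\Omega K(y-x)f(y)\,dy=\int_\Omega K(x-y)f(y)\,dy$, the usual normalization. The right move was to flag the inconsistency and prove the corrected identity. This is not cosmetic: the paper later uses the $+$ sign. For example, $\mathcal D\xi_0=-3+3\,\mathcal D\mathcal T[1]$ equals $-6$, not $0$, under the stated convention. Also, for merely continuous $f$ the identity holds only in the weak sense, and only when $f$ is integrable so that $\mathcal T f$ is defined.
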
     
     
The following example will be used further. 
\begin{ex}\label{funcionesO}
Let $\Omega\subset \mathbb H$ be  a domain  such that $\partial \Omega$ is a 3-dimensional smooth surface. For $x\in \Omega$ let us introduce the following sequence of functions  
\begin{align*}
 \xi_0 (x) & = 		 { \bf x}  +  3\mathcal T \left[ 1\right]    \\
		 & =   { \bf x} +  3 \int_{\Omega} K  (y-x)  dy, \\
		\xi_n (x) &=	 
		{\bf x}   x^n + \mathcal T\left[     x^n \bigg(  3- 2 {\bf x} x \sum_{k=1}^n |x|^{2(k-1)}  \bigg)  \right]		
		\\
		&=	  { \bf x} x^n      - 2 \sum_{k=1}^{n}\mathcal T \left[   { \bf x}  \ (\bar{x})^{k-1} x^{n-k} \right]+ 3 \mathcal T \left[   x^n   \right] \\
		  &=		  { \bf x} x^n      +
		  \int_{\Omega} K  (y-x) \bigg[ - 2 \sum_{k=1}^{n}
		   \bigg( 
		  {\bf y} \ (\bar{y})^{k-1}  y^{n-k} \bigg) + 3    y^n   \bigg]dy, \quad 
		  \forall x\in \Omega.
\end{align*}  
\end{ex}

\subsection{A brief summary of slice regular function theory}\label{slice}

Here a brief summary of the basic results on slice regular functions will be given,  see \cite{GS1, CSS, GlobalOp}.

\begin{defn}
A domain  $\Omega\subset \mathbb H$ is called axially symmetric if $\Omega\cap \mathbb R\neq \emptyset$  and if  $x+{\bf i}y \in \Omega$ for 
$x,y\in\mathbb R$ and ${\bf i}\in \mathbb S^2$ implies $\{x+{\bf j}y \ \mid  \ {\bf j}\in\mathbb{S}^2\}\subset \Omega$. In addition,   
$\Omega$ is called   slice domain, or s-domain, if $\Omega_{\bf i} = \Omega \cap \mathbb C({\bf i})$ is  a domain in $\mathbb C({\bf i})$, 
 for all ${\bf i}\in\mathbb S^2$.
\end{defn}

The quaternionic  multiplication allows us to see that  ${\bf i}^2=-1$   for all  ${\bf i} \in \mathbb S^2$. Therefore,   $\mathbb C({\bf i}):=\{x+{\bf i}y  \mid  x,y\in\mathbb R\} \cong \mathbb C$ as fields. 

\begin{defn}
Let $\Omega\subset\mathbb H$ be an axially symmetric s-domain open set. A  real differentiable function $f:\Omega\to \mathbb{H}$ is called quaternionic left slice regular function, or slice regular function, if   
\begin{align*}
 \dfrac{\partial  f\mid_{\Omega_{\bf i}}
 }{ \partial \bar z_{\bf i}} 
:=
\frac{1}{2}\left (\frac{\partial}{\partial x}+{\bf i} \frac{\partial}{\partial y}\right )
f\mid_{\Omega_{\bf i}}=0, \  \textrm{ on  $\Omega_{\bf i}= \Omega\cap \mathbb C({\bf i})$,}
\end{align*}
for all ${\bf i}\in \mathbb{S}^2$. Its  slice  derivative, or Cullen's derivative is     
$f'=\displaystyle 
 {\partial}_{{\bf i}}f\mid_{_{\Omega\cap \mathbb C({\bf i})}} = \frac{\partial}{\partial x} f\mid_{_{\Omega\cap \mathbb C({\bf i})}}$. 
 By $\mathcal{SR}(\Omega)$  we mean  the quaternionic right$-$module of   slice regular functions defined on  $\Omega$.
  \end{defn}

The basic examples of slice regular functions are the power series of type $\sum_{n=0}^{\infty} q^n a_n$ on their ball of convergence $\mathbb B^4(0,1)$.

\begin{thm}
Given  $f,g\in\mathcal{SR}(\mathbb B^4(0,1))$ there exist sequences of quaternions  $(a_n)_{n=0}^\infty$ and  $(b_n)_{n=0}^\infty$  such that  
$ 
f(q)= \sum_{n=0}^{\infty} q^n a_n$ and $  g(q)= \sum_{n=0}^{\infty} q^n b_n$  for all $q\in\mathbb B^4(0,1)$.
  \end{thm}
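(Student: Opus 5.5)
We prove the power-series representation separately for $f$ and $g$; the two cases are identical, so fix $f\in\mathcal{SR}(\mathbb B^4(0,1))$. The plan is to build the coefficients from one slice using the classical one-variable theory, and then carry the expansion to every other slice.

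\textbf{Step 1: the splitting lemma on one slice.} Fix ${\bf i}\in\mathbb S^2$ and choose ${\bf j}\in\mathbb S^2$ orthogonal to ${\bf i}$, so that $\{1,{\bf i},{\bf j},{\bf i}{\bf j}\}$ is a basis of $\mathbb H$. On the disc $\Omega_{\bf i}=\mathbb B^4(0,1)\cap\mathbb C({\bf i})$ we write
$$f|_{\Omega_{\bf i}}=F+G\,{\bf j},$$
where $F,G:\Omega_{\bf i}\to\mathbb C({\bf i})$. Since ${\bf i}$ multiplies from the left and ${\bf j}$ stands on the right, the equation $\frac12(\partial_x+{\bf i}\partial_y)f|_{\Omega_{\bf i}}=0$ splits into $\bar\partial F=0$ and $\bar\partial G=0$. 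Thus $F$ and $G$ are holomorphic on the unit disc of $\mathbb C({\bf i})\cong\mathbb C$. By the classical Taylor theorem,
$$F(z)=\sum_n z^n\alpha_n,\qquad G(z)=\sum_n z^n\beta_n,\qquad \alpha_n,\beta_n\in\mathbb C({\bf i}),$$
and both series converge on $|z|<1$. Setting $a_n:=\alpha_n+\beta_n{\bf j}\in\mathbb H$ gives $f(z)=\sum_n z^na_n$ on $\Omega_{\bf i}$. Moreover $a_n=f^{(n)}(0)/n!$, computed with Cullen's derivative.

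\textbf{Step 2: the coefficients do not depend on the slice.} For every ${\bf i}$, Step 1 expresses $a_n$ as the $n$-th slice derivative at $0$. Because $0$ is real, it lies in every slice, and $\partial_x$ at a real point is a derivative along the real axis. Hence $f^{(n)}(0)$ is computed from the restriction of $f$ to a real neighbourhood of $0$, which all slices share, and so the $a_n$ are the same for every ${\bf i}$. (An equivalent route is the identity principle: the function $h(q):=\sum q^na_n$ converges on $\mathbb B^4(0,1)$ because $|q^na_n|=|q|^n|a_n|$ and $\limsup|a_n|^{1/n}\le1$ from the disc bound. Then $h$ is slice regular, and $f-h$ vanishes on $\Omega_{{\bf i}_0}$, so it vanishes on the real segment and therefore on each slice, by the one-variable identity theorem applied to its splitting components.)

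\textbf{Step 3: conclusion and main obstacle.} Every $q\in\mathbb B^4(0,1)$ lies in some $\Omega_{\bf i}$, so $f(q)=\sum q^na_n$ on the whole ball; the same argument produces the $b_n$ for $g$. The main obstacle is the slice independence in Step 2. The point to check carefully is that Cullen's derivative commutes with the splitting, so that derivatives at the real point $0$ are intrinsic. This uses that slice regularity gives $f|_{\Omega_{\bf i}}$ real-analytic in $(x,y)$, which in turn follows from the holomorphy of $F$ and $G$.
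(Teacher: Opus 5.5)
Your proof is correct. You split $f|_{\Omega_{\bf i}}=F+G{\bf j}$ into two holomorphic $\mathbb C({\bf i})$-valued functions, take their Taylor expansions on the unit disc, and note that $a_n=\frac{1}{n!}\partial_x^n f(0)$ depends only on $f$ along the real segment common to all slices; this is the standard Gentili--Struppa argument, which the paper simply recalls from the cited literature without giving a proof of its own.
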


\begin{defn}\label{star-product}
According to the previous theorem, the  $*$-product  of $f$ and $g$ is defined by $(f*g)(q):= \sum_{n=0}^{\infty} q^n  \sum_{k=0}^{n} a_k b_{n-k}$, 
for all $q\in\mathbb B^4(0,1)$. Moreover, if $f(q) \neq, 0$, then $f * g(q) = f(q)g(f(q)^{-1}qf(q))$.
 \end{defn}

\begin{defn}
Let $\Omega\subset \mathbb{H}$ a domain. The so-called global operators $G$  and  $G_r$ were introduced in $\Omega$ as follows:	
\begin{align*}    
G[f] &  := \displaystyle  \|  {\bf  x}   \|^2 \partial_0 f +  {\bf  x}  \sum_{k=1}^3 x_k \partial_k f ,\\
G_r[f]  & :=\|  {\bf x}  \|^2  \partial_0 f +  \sum_{k=1}^3 x_k (\partial_k f ) {\bf x},
\end{align*}
for any continuously differentiable $\mathbb H$-valued function $f$ defined on a domain, where $x$ stands for $\sum_{k=0}^3x_k {\bf e}_k$. 
\end{defn}

We emphasize that $\textrm{Ker}(G)$ is a quaternionic-right module, and $\textrm{Ker}(G_r)$  is a  quaternionic-left module. The quaternionic-right module of slice regular functions  is contained in  $\textrm{Ker}( G)$ on axially symmetric s-domains.

\begin{defn}(Quaternionic holomorphic maps). 
Let $\Omega\subset\mathbb  H$ be an axially symmetric open set and $\mathcal U\subset \mathbb R^{2}$ be such that $q = u + I v \in  \Omega$ whenever $(u, v) \in \mathcal  U$. The (left) quaternionic holomorphic maps on $U$ are of the form $f(q) = \alpha(u, v) +I \beta (u, v)$, where $\alpha, \beta$ are $\mathbb H$-valued differentiable functions, satisfying $\alpha (u, v) = \alpha(u,-v)$ and $\beta(u, v) = - \beta (u,-v)$ for all $(u, v) \in \mathcal U$ (called  even-odd conditions). Also $\alpha$ and $\beta$ satisfy the Cauchy-Riemann system:
 \begin{align*}
 \frac{\partial \alpha}{\partial u}- \frac{\partial \beta}{\partial v} = 0, \quad 
 \frac{\partial \alpha}{\partial v}+ \frac{\partial \beta}{\partial u} = 0.
\end{align*}
\end{defn}
By $\mathcal{HR}(\Omega)$ we mean the set of quaternionic holomorphic maps on $\Omega$, see \cite{GPA}.

For achieving our goals will be necessary to apply the following theorem.
\begin{thm} \label{teoHGM} \ {} 
\begin{enumerate}
\item Let $\Omega\subset \mathbb H$ be an axially symmetric s-domain.
Then $\mathcal {SR}(\Omega)= \mathcal { HR}(\Omega)  \subset  \textrm{Ker}( G) \cap C^1 (\Omega,\mathbb H)$  as function sets.
\item Let $\Omega\subset \mathbb H$ be a  symmetric s-domain that does not intersect
the real axis and $f : \Omega\to  \mathbb H$ be a real differentiable function. Then $\mathcal {SR}(\Omega)=\textrm{Ker}(G) \cap C^1 (\Omega,\mathbb H)$.
\end{enumerate}
\end{thm}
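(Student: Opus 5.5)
The plan is to work in the cylindrical-type coordinates adapted to slices. Write $x = x_0 + {\bf x}$ with ${\bf x} = r\,{\bf i}$, where $r = \|{\bf x}\|$ and ${\bf i}\in\mathbb S^2$ whenever ${\bf x}\neq 0$. The key observation is that, by Euler's formula for the radial derivative in $\mathbb R^3$,
$$\sum_{k=1}^3 x_k\partial_k f = r\,\partial_r f,$$
where $\partial_r$ is the derivative along the ray ${\bf x}\mapsto t{\bf x}$ with ${\bf i}$ fixed. It follows that
$$G[f] = r^2\partial_0 f + r{\bf i}\, r\partial_r f = r^2\Big(\partial_0 + {\bf i}\,\partial_r\Big)f = 2r^2\,\frac{\partial f|_{\Omega_{\bf i}}}{\partial \bar z_{\bf i}}$$
at every point with $r>0$. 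Here $\Omega_{\bf i}$ is parametrised by $x_0 + {\bf i}\,y$, and for $y>0$ we have $y=r$. For $y<0$ the same identity holds, because $x_0+{\bf i}y = x_0 + (-{\bf i})|y|$ and $(-{\bf i})\partial_{|y|} = {\bf i}\partial_y$. So on $\Omega\setminus\mathbb R$ we obtain the pointwise identity $G[f](x_0+{\bf i}y) = 2y^2\,\bar\partial_{\bf i} f(x_0+{\bf i}y)$ for every ${\bf i}\in\mathbb S^2$.

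For item 2, $\Omega\cap\mathbb R=\emptyset$, so $y\neq 0$ everywhere. The identity then gives $G[f]=0$ if and only if $\bar\partial_{\bf i}f=0$ on every slice $\Omega_{\bf i}$, that is, $\mathrm{Ker}(G)\cap C^1=\mathcal{SR}(\Omega)$. The definition of slice regularity used here is stated for axially symmetric s-domains. I read item 2 with the same slice-wise condition imposed on symmetric slice domains that miss the real axis, so no separate argument is needed.

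For item 1, the inclusion $\mathcal{SR}(\Omega)\subset\mathrm{Ker}(G)$ is proved in two steps.
\begin{enumerate}
\item Off the real axis, the identity above gives $G[f]=0$ directly.
\item On $\Omega\cap\mathbb R$, the factor $r^2$ and the factor ${\bf x}$ both vanish, so $G[f](x)=0$ there trivially. Slice regular functions are $C^1$, which in fact follows from the representation formula, so $f\in C^1(\Omega,\mathbb H)$.
\end{enumerate}

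The equality $\mathcal{SR}(\Omega)=\mathcal{HR}(\Omega)$ is proved in both directions.
\begin{enumerate}
\item Suppose $f\in\mathcal{SR}$. Define
$$\alpha(u,v)=\tfrac12\big(f(u+Iv)+f(u-Iv)\big),\qquad \beta(u,v)=\tfrac{-I}{2}\big(f(u+Iv)-f(u-Iv)\big).$$
By the representation formula on axially symmetric s-domains, $\alpha$ and $\beta$ are independent of $I$. They satisfy the even-odd conditions by construction, and the Cauchy--Riemann system follows from $\bar\partial_I f=0$ after separating the $I$-linear parts.
\item Conversely, suppose $f=\alpha+I\beta$ with $\alpha,\beta$ satisfying the Cauchy--Riemann system. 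A direct computation gives
$$(\partial_u + I\partial_v)(\alpha+I\beta) = (\alpha_u-\beta_v) + I(\alpha_v+\beta_u) = 0.$$
The even-odd conditions make $f$ well defined, since $u+Iv=u+(-I)(-v)$.
\end{enumerate}

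The main obstacle is the step $\mathcal{SR}\Rightarrow\mathcal{HR}$: the independence of $\alpha,\beta$ from $I$ requires the representation formula, which in turn needs the s-domain hypothesis (identity principle via $\Omega\cap\mathbb R\neq\emptyset$). I would invoke it from the cited literature rather than reprove it.
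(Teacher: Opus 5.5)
Your proof is correct in substance, but it follows a genuinely different route from the paper, which gives no argument and simply refers to Ghiloni \cite{GSLICE}. You make the $\mathrm{Ker}(G)$ part self-contained through the slice-wise factorization $G[f](x_0+{\bf i}y)=y^2(\partial_0+{\bf i}\partial_y)f(x_0+{\bf i}y)=2y^2\,\frac{\partial f|_{\Omega_{\bf i}}}{\partial \bar z_{\bf i}}(x_0+{\bf i}y)$. This holds for every real differentiable $f$, every ${\bf i}\in\mathbb S^2$ and every real $y$; on the real axis both sides vanish. It makes $\mathcal{SR}(\Omega)\subset\mathrm{Ker}(G)$, and the converse off the real axis, essentially one-line facts, and it shows exactly where $\Omega\cap\mathbb R=\emptyset$ enters. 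You still import the representation formula, both for $\mathcal{SR}=\mathcal{HR}$ and for $C^1$ regularity on s-domains. What the citation buys the paper is a careful treatment of regularity, slice-by-slice versus global smoothness, which is the subject of \cite{GSLICE}. That is precisely where your write-up is too quick.

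In item 2, your identity proves that a real differentiable $f$ is slice regular if and only if $G[f]=0$. That is, $\mathrm{Ker}(G)\cap C^1(\Omega,\mathbb H)=\mathcal{SR}(\Omega)\cap C^1(\Omega,\mathbb H)$. It does not give $\mathcal{SR}(\Omega)\subset C^1(\Omega,\mathbb H)$, and on a domain missing the real axis that inclusion fails for the slice-wise definition. Take $f(x_0+{\bf x})=h({\bf x}/\|{\bf x}\|)$ with $h:\mathbb S^2\to\mathbb H$ differentiable but not $C^1$. It is real differentiable and constant on each half of every slice, yet it is not $C^1$. So state item 2 as this pointwise equivalence, which is what the hypothesis on $f$ in the statement suggests, rather than claiming the literal set equality.

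There are two smaller points in item 1.
\begin{enumerate}
\item For $\mathcal{HR}\subset\mathcal{SR}$ you must check that $f(x_0+{\bf x})=\alpha(x_0,\|{\bf x}\|)+\frac{{\bf x}}{\|{\bf x}\|}\beta(x_0,\|{\bf x}\|)$ is real differentiable at real points. It is: parity gives $\partial_v\alpha(u,0)=0$ and $\beta(u,0)=\partial_u\beta(u,0)=0$, so the second term equals ${\bf x}\,\partial_v\beta(u,0)$ up to $o(|x-u|)$.
\item For $\mathcal{SR}\subset\mathcal{HR}$, the identity $(\alpha_u-\beta_v)+{\bf i}(\alpha_v+\beta_u)=0$ does not split by itself for $\mathbb H$-valued $\alpha,\beta$. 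Either use two units ${\bf i}\neq{\bf j}$, which gives $({\bf i}-{\bf j})(\alpha_v+\beta_u)=0$, or use the values at $(u,\pm v)$ together with the parity of $\alpha,\beta$.
\end{enumerate}
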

\begin{proof}
	See \cite{GSLICE}.
\end{proof}

\begin{prop}\label{FueterYnu}  
\begin{align*}  \mathcal   D[{ \bf x } f ] = & 2\frac{{ \bf x }}{\|{ \bf x } \|^2}      G[f] - 3 f - { \bf x }   \mathcal D[f], \\ 
   \mathcal  D_r[ f  { \bf x } ] = & 2     G_r[f] \frac{{ \bf x }}{\|{ \bf x }\|^2}  - 3 f -     { \bf x } D_r[f]  { \bf x } ,\end{align*}
for all $f\in C^1(\Omega,\mathbb H)$
\end{prop}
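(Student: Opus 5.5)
The plan is a direct computation from the definitions of $\mathcal D$, $\mathcal D_r$, $G$ and $G_r$. It rests on two algebraic facts about the vector part ${\bf x}=\sum_{k=1}^3 x_k{\bf e}_k$. First, ${\bf x}^2=-\|{\bf x}\|^2$. Second, the anticommutation rule ${\bf e}_k{\bf x}+{\bf x}{\bf e}_k=-2x_k$ for $k=1,2,3$, which follows from \eqref{Hproduct}. Throughout we work on $\Omega$ minus the real axis, so that $\|{\bf x}\|\neq 0$ and the factor ${\bf x}/\|{\bf x}\|^2$ makes sense.

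For the left identity, apply the Leibniz rule termwise. We have $\partial_0({\bf x}f)={\bf x}\,\partial_0 f$, and $\partial_k({\bf x}f)={\bf e}_k f+{\bf x}\,\partial_k f$ for $k=1,2,3$. Using ${\bf e}_k^2=-1$ this gives
$$\mathcal D[{\bf x}f]=-3f+{\bf x}\,\partial_0 f+\sum_{k=1}^3 {\bf e}_k{\bf x}\,\partial_k f .$$
Next replace ${\bf e}_k{\bf x}$ by $-{\bf x}{\bf e}_k-2x_k$. The term $-{\bf x}\sum_{k=1}^3{\bf e}_k\partial_k f$ is then rewritten as $-{\bf x}\mathcal D[f]+{\bf x}\partial_0 f$, which yields
$$\mathcal D[{\bf x}f]=-3f-{\bf x}\mathcal D[f]+2{\bf x}\,\partial_0 f-2\sum_{k=1}^3x_k\partial_k f.$$
Finally, expand $2\frac{{\bf x}}{\|{\bf x}\|^2}G[f]$ directly from the definition of $G$. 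Using ${\bf x}{\bf x}=-\|{\bf x}\|^2$, it equals $2{\bf x}\,\partial_0 f-2\sum_{k=1}^3 x_k\partial_k f$, which is exactly the remaining part.

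The right identity is the mirror computation. We have $\partial_k(f{\bf x})\,{\bf e}_k=-f+\partial_k f\,{\bf x}{\bf e}_k$ for $k=1,2,3$, and ${\bf x}{\bf e}_k=-{\bf e}_k{\bf x}-2x_k$. These give
$$\mathcal D_r[f{\bf x}]=-3f+2\,\partial_0 f\,{\bf x}-\mathcal D_r[f]\,{\bf x}-2\sum_{k=1}^3x_k\partial_k f .$$
On the other side, $2G_r[f]\frac{{\bf x}}{\|{\bf x}\|^2}=2\partial_0 f\,{\bf x}-2\sum_{k=1}^3 x_k\partial_k f$, again by ${\bf x}^2=-\|{\bf x}\|^2$.

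The only delicate point is bookkeeping of left versus right multiplication by the noncommuting factor ${\bf x}$. On this point my computation produces the last term as $-\mathcal D_r[f]\,{\bf x}$ rather than $-{\bf x}\mathcal D_r[f]{\bf x}$. I would therefore double-check that term and, if confirmed, read the stated formula with $\mathcal D_r[f]\,{\bf x}$; this is the form consistent with the symmetric structure of the left identity.
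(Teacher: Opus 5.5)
Your computation is correct. It is the natural direct verification: the Leibniz rule together with ${\bf e}_k{\bf x}+{\bf x}{\bf e}_k=-2x_k$ and ${\bf x}^2=-\|{\bf x}\|^2$. The paper gives no argument of its own here, only a reference to \cite{G}.

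Your doubt about the second identity is justified. The printed term $-{\bf x}\,\mathcal D_r[f]\,{\bf x}$ is a misprint for $-\mathcal D_r[f]\,{\bf x}$. The test function $f=x_0$ shows this. Then $G_r[f]=\|{\bf x}\|^2$, $\mathcal D_r[f]=1$ and $\mathcal D_r[x_0{\bf x}]={\bf x}-3x_0$. This agrees with your right-hand side $2{\bf x}-3x_0-{\bf x}$. It disagrees with the printed one, $2{\bf x}-3x_0+\|{\bf x}\|^2$.

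Only the first identity is used later, in Proposition \ref{identities}, so the misprint does not affect the rest of the paper. Your restriction to ${\bf x}\neq 0$ is the correct reading of the factor ${\bf x}/\|{\bf x}\|^2$, which the statement leaves implicit.
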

\begin{proof}
See \cite{G}.
\end{proof}

A study of the function theory associated  with the quaternionic differential operator 
\begin{align*}
	\overline{\vartheta}= \frac{1}{2} \bigg( \partial_0   +\frac{{ \bf x }}{\|{ \bf x }\|^2}    \sum_{k=1}^3 x_k \partial_k\bigg)    
\end{align*}
was presented in \cite{GP_2}.
We define the operator $\nu$ 
\begin{align*}   \nu :=- {\bf x}   \partial_0   +    \sum_{k=1}^3 x_k \partial_k,    
\end{align*}
and note that
\begin{align*}
		\overline{\vartheta}[f](x)=\frac{1}{2}\frac{{ \bf x }}{\|{ \bf x }\|^2}  \nu[f](x),
\end{align*}
\begin{align*}
	G[f](x)=2\|{ \bf x }\|^2\overline{\vartheta}[f](x)
\end{align*}
and
 $$G[f](x) ={\bf x}\nu [f] (x), \quad \forall x\in \Omega, $$ for all $f\in C^1(\Omega,\mathbb H) $. Therefore, 
 \begin{align}\label{gnu}
 Ker(G)\cap C^1(\Omega,\mathbb H) = Ker (\nu) \cap C^1(\Omega,\mathbb H).
\end{align}

\section{Main results}
Our main results show specific relationships between $\nu$, $\mathcal{D}$ and $\Delta_{\mathbb{R}^{4}}$.
\subsection{On $\nu$ and  $\mathcal{D}$}\label{NuyD}
The following statements shows some relationships between $\nu$ and $\mathcal{D}$ to obtain important properties of $\nu$ and  $\textrm{Ker}(\nu)$.
	\begin{prop}\label{identities}Let $\Omega \subset \mathbb H$ be a domain and $f\in C^1(\Omega,\mathbb H)$. Then
		\begin{enumerate}
			\item   $				-2  \nu [f]= { \bf x}\mathcal{D}[f]+\mathcal{D}[ { \bf x} f] + 3 f$,			\item   $			\nu [f]= -{ \bf x}\mathcal{D}[f]+\sum_{k=0}^3[ {\bf x}, \textrm{Grad} f_k  ]{\bf e}_k$.
		\end{enumerate} 
\end{prop}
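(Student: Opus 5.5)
For the first identity I plan to use Proposition~\ref{FueterYnu} together with the relation $G[f]={\bf x}\,\nu[f]$ recorded just before \eqref{gnu}. At points where ${\bf x}\neq 0$, substituting $G[f]={\bf x}\nu[f]$ into the first formula of Proposition~\ref{FueterYnu} gives
\begin{align*}
\mathcal D[{\bf x} f] = 2\frac{{\bf x}\,{\bf x}}{\|{\bf x}\|^2}\,\nu[f] - 3f - {\bf x}\mathcal D[f] .
\end{align*}
Since ${\bf x}$ is a pure vector, ${\bf x}^2 = -\|{\bf x}\|^2$, so this becomes $\mathcal D[{\bf x} f] = -2\nu[f] - 3f - {\bf x}\mathcal D[f]$, which is exactly item 1 after rearranging. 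Both sides are continuous in $x$, and the set $\{{\bf x}=0\}$, the real axis, has empty interior. So the identity extends to all of $\Omega$ by continuity.

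Alternatively, and to avoid relying on the singular factor ${\bf x}/\|{\bf x}\|^2$ at all, I would verify item 1 directly via the Leibniz rule. Since $\partial_0{\bf x}=0$ and $\partial_k {\bf x}={\bf e}_k$ for $k=1,2,3$, we get $\mathcal D[{\bf x}f] = \sum_{k=0}^3 {\bf e}_k{\bf x}\,\partial_k f + \sum_{k=1}^3{\bf e}_k{\bf e}_k f$. The second sum equals $-3f$. For the first, I use ${\bf e}_k{\bf x} = -{\bf x}{\bf e}_k - 2x_k$ for $k\ge1$, which is the anticommutation of pure vectors. This yields ${\bf x}\partial_0 f - {\bf x}\sum_{k\ge1}{\bf e}_k\partial_k f - 2\sum_{k\ge1}x_k\partial_k f$. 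Adding ${\bf x}\mathcal D[f]$ and $3f$ then leaves $2{\bf x}\partial_0 f - 2\sum_k x_k\partial_k f = -2\nu[f]$.

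For item 2 I expand ${\bf x}\mathcal D[f] = {\bf x}\partial_0 f + \sum_{k=1}^3 {\bf x}{\bf e}_k\,\partial_k f$. By \eqref{Hproduct}, ${\bf x}{\bf e}_k = -x_k + [{\bf x},{\bf e}_k]_{\mathbb R^3}$. Hence
\begin{align*}
{\bf x}\mathcal D[f] = {\bf x}\partial_0 f - \sum_{k=1}^3 x_k\partial_k f + \sum_{k=1}^3 [{\bf x},{\bf e}_k]\,\partial_k f = -\nu[f] + \sum_{k=1}^3 [{\bf x},{\bf e}_k]\,\partial_k f .
\end{align*}
Next I write $\partial_k f=\sum_{j=0}^3 \partial_k f_j\,{\bf e}_j$ with real coefficients $\partial_k f_j$. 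These are scalars and can be moved inside the cross product, so bilinearity gives $\sum_{k}[{\bf x},{\bf e}_k]\partial_k f = \sum_{j=0}^3 [{\bf x}, \sum_k {\bf e}_k\partial_k f_j]\,{\bf e}_j = \sum_{j=0}^3[{\bf x},\textrm{Grad} f_j]{\bf e}_j$. Rearranging gives item 2.

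The only delicate point is bookkeeping of the noncommutative order. The units ${\bf e}_j$ coming from $f$ must stay on the right of the cross product, and only the real partial derivatives may be commuted. I expect that to be the main (minor) obstacle; otherwise both items are direct computations.
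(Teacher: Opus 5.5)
Your proposal is correct and follows the paper's route. For item 1 you substitute $G[f]={\bf x}\,\nu[f]$ into Proposition~\ref{FueterYnu} and use ${\bf x}^2=-\|{\bf x}\|^2$; for item 2 you split ${\bf x}{\bf e}_k$ into $-x_k+[{\bf x},{\bf e}_k]_{\mathbb R^3}$, both exactly as in the paper. Your continuity extension across the real axis, or your direct Leibniz-rule check of item 1, also settles a point the paper leaves implicit: Proposition~\ref{FueterYnu} involves ${\bf x}/\|{\bf x}\|^2$, which is undefined where ${\bf x}=0$.
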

\begin{proof} Set $f\in C^{1}(\Omega,\mathbb H)$.
	\begin{enumerate}
		\item   {From  Proposition \ref{FueterYnu},  we see that
		\begin{align*} 
			 \|{     \bf x} \|^2\mathcal{D}[f] =2 G[f] +{\bf x} \mathcal{D}[{ \bf x}f]+3{\bf x}f, \quad \textrm{on} \ \  \Omega,
		\end{align*}
		  and  as  $G={\bf{x}\nu}$, hence} 
		\begin{align*} 
			 \|{     \bf x} \|^{2}\mathcal{D}[f] =& 2 {\bf{x}\nu} [f] +{\bf x} \mathcal{D}[{ \bf x}f]+3{\bf x}f,  \quad \textrm{on} \ \  \Omega. 
		\end{align*}
	Then
		\begin{align*} 
			2 \nu[f]= -{ \bf x} \mathcal{D}[f]- \mathcal{D}[{ \bf x}f]-3f,  \quad \textrm{on} \ \  \Omega.
	\end{align*}
	\item   By the other hand,
		\begin{align*}
			\nu[f]&=-{ \bf x}\partial_{0}[f]+\sum_{k=1}^{3}x_{k}\partial_{k} [f]=-{ \bf x}\partial_{0}[f]-\frac{1}{2}\sum_{k=1}^{3}({ \bf x}{\bf{e}}_{k}+{\bf{e}}_{k}{ \bf x})\partial_{k} [f] \\
					 &=-{\bf {x}}\mathcal{D}[f]+\frac{1}{2}\sum_{k=1}^{3}({{ \bf x}{\bf{e}}_{k}-\bf{e}}_{k}{ \bf x})\partial_{k} [f]\\ 
						 &=-{\bf {x}}\mathcal{D}[f]+\frac{1}{2}\sum_{j=0}^{3}\bigg({ \bf x}\bigg(\sum_{k=1}^{3}{\bf{e}}_{k}\partial_{k}[f_{j}]\bigg)-\bigg(\sum_{k=1}^{3}{\bf{e}}_{k}\partial_{k}[f_{j}]\bigg){ \bf x}\bigg){\bf{e}}_{j}\\ 
						 &=-{\bf {x}}\mathcal{D}[f]+\frac{1}{2}\sum_{j=0}^{3}\bigg({ \bf x}\textrm{Grad} f_j-\textrm{Grad} f_j { \bf x}\bigg){\bf{e}}_{j}\\ 
&=-{\bf {x}}\mathcal{D}[f]+\sum_{j=0}^{3}[{ \bf x},\textrm{Grad} f_j]{\bf{e}}_{j}.
		\end{align*}
	\end{enumerate}
\end{proof}

{\begin{cor}\label{vectorInter}Vector interpretation of operator $\nu$. 
 Let $f\in C^1(\Omega,\mathbb H)$, then
	\begin{align*}
		\nu [f] &= \langle {\bf {x}} ,\textrm{Grad}f_{0}+\partial_0 {\bf f}+\textrm{Rot}[{\bf f}]\rangle-(x_2 \partial_3 f_1 -x_3\partial_2 f_1)\\ &+(x_1 \partial_3 f_2 -x_3\partial_1 f_2)-(x_1 \partial_2 f_3 -x_2\partial_1 f_3)\\ &-(\partial_{0} f_{0} +\textrm{Div}[{\bf f}]) {\bf {x}}  -[{\bf {x}},\partial_0 {\bf f}+\textrm{Rot}[{\bf f}]]   \\ & -((x_1 \partial_2 f_2 -x_2\partial_1 f_2)+(x_1 \partial_3 f_3 -x_3\partial_1 f_3)){\bf{e}}_{1}\\ &  +((x_1 \partial_2 f_1 -x_2\partial_1 f_1) -(x_2 \partial_3 f_3 -x_3\partial_2 f_3)){\bf{e}}_{2}\\ &  +((x_1 \partial_3 f_1 -x_3\partial_1 f_1) +(x_2 \partial_3 f_2 -x_3\partial_2 f_2)) {\bf{e}}_{3}.
	\end{align*}
		\end{cor}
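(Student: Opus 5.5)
The plan is to start from the second identity of Proposition \ref{identities}, namely
\begin{align*}
\nu[f] = -{\bf x}\,\mathcal D[f] + \sum_{j=0}^3 [{\bf x},\textrm{Grad} f_j]\,{\bf e}_j ,
\end{align*}
and expand both terms into scalar and vector parts by means of the product formula \eqref{Hproduct}. This reduces the corollary to bookkeeping. The only genuine content is deciding which cross-product terms are grouped into $\textrm{Rot}[{\bf f}]$ and which are left as the explicit $2\times 2$ minors $x_a\partial_b f_c - x_b\partial_a f_c$.

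First, for the term $-{\bf x}\mathcal D[f]$, I will write $\mathcal D[f]=(\partial_0 f_0-\textrm{Div}[{\bf f}]) + (\textrm{Grad}[f_0]+\partial_0{\bf f}+\textrm{Rot}[{\bf f}])$. Here I read $\textrm{Div}[{\bf f}]$ as the scalar $\sum_k\partial_k f_k$ and note that the vector part contains $\partial_0{\bf f}$ from ${\bf e}_0\partial_0 f$. By \eqref{Hproduct}, with ${\bf x}$ purely vectorial, this gives
\begin{align*}
-{\bf x}\mathcal D[f] ={}& \langle {\bf x},\textrm{Grad}f_0+\partial_0{\bf f}+\textrm{Rot}[{\bf f}]\rangle - (\partial_0 f_0-\textrm{Div}[{\bf f}])\,{\bf x}\\
&- [{\bf x},\textrm{Grad}f_0+\partial_0{\bf f}+\textrm{Rot}[{\bf f}]].
\end{align*}
This already produces the first scalar term, the ${\bf x}$-multiple and the cross product in the statement. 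I will check the sign convention of $\textrm{Div}$ against the statement's $-(\partial_0f_0+\textrm{Div}[{\bf f}]){\bf x}$, which probably absorbs the sign in the paper's convention.

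Second, I will treat the sum $\sum_j [{\bf x},\textrm{Grad} f_j]{\bf e}_j$. The $j=0$ term is $[{\bf x},\textrm{Grad} f_0]$, and it cancels the $\textrm{Grad}f_0$ contribution of the cross product above. I expect that cancellation to be reflected in the statement's grouping, or else the leftover to be absorbed. For $j=1,2,3$ I will compute $[{\bf x},\textrm{Grad} f_j]$ componentwise. Its components are minors of the form $x_a\partial_b f_j - x_b\partial_a f_j$. I then multiply on the right by ${\bf e}_j$ using ${\bf v}{\bf e}_j = -v_j + [{\bf v},{\bf e}_j]$. The scalar parts give the three minors $-(x_2\partial_3 f_1-x_3\partial_2 f_1)+\dots$ in the second line of the statement. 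The vector parts give the ${\bf e}_1,{\bf e}_2,{\bf e}_3$ lines, after the diagonal-type minors are identified.

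The main obstacle is the final regrouping. Some of the minors arising from $[{\bf x},\textrm{Grad} f_j]\times{\bf e}_j$ combine with ${\bf x}$-weighted derivatives into pieces of $[{\bf x},\textrm{Rot}[{\bf f}]]$ and $\langle {\bf x},\textrm{Rot}[{\bf f}]\rangle$, and these must be matched exactly to the residual terms displayed. I will handle this by writing all nine products $x_a\partial_b f_c$ for each output component on both sides and comparing coefficients one component at a time, scalar first and then ${\bf e}_1,{\bf e}_2,{\bf e}_3$. Any discrepancies will be traced to the sign conventions in \eqref{Hproduct} and in $\textrm{Rot}$.
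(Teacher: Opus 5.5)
Your plan is the paper's own argument. You start from the second identity of Proposition~\ref{identities}, expand $-{\bf x}\mathcal D[f]$ with \eqref{Hproduct}, let the $j=0$ term $[{\bf x},\textrm{Grad} f_0]$ cancel the $\textrm{Grad} f_0$ part of the cross product, and expand $[{\bf x},\textrm{Grad} f_j]{\bf e}_j$ for $j=1,2,3$. That computation is sound, and the regrouping you fear never happens. Write $[{\bf x},\textrm{Grad} f_j]=A_j{\bf e}_1+B_j{\bf e}_2+C_j{\bf e}_3$ with $A_j=x_2\partial_3 f_j-x_3\partial_2 f_j$, $B_j=x_3\partial_1 f_j-x_1\partial_3 f_j$ and $C_j=x_1\partial_2 f_j-x_2\partial_1 f_j$. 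Then $\sum_{j=1}^3[{\bf x},\textrm{Grad} f_j]{\bf e}_j=-(A_1+B_2+C_3)+(B_3-C_2){\bf e}_1+(C_1-A_3){\bf e}_2+(A_2-B_1){\bf e}_3$. This is, term by term, the displayed scalar minors and the three ${\bf e}_k$-lines. The $\textrm{Rot}$ terms come only from $-{\bf x}\mathcal D[f]$.

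The genuine problem is the sign you flagged and then proposed to absorb into a ``convention''. No such convention exists. You and the paper both use $\mathcal D[f]=\partial_0 f_0+\textrm{Grad} f_0+\partial_0{\bf f}-\textrm{Div}[{\bf f}]+\textrm{Rot}[{\bf f}]$, which forces $\textrm{Div}[{\bf f}]=\sum_{k=1}^3\partial_k f_k$; the ${\bf e}_k$ in the paper's definition of $\textrm{Div}$ is a typo. Your expansion then yields $-(\partial_0 f_0-\textrm{Div}[{\bf f}]){\bf x}$, not the printed $-(\partial_0 f_0+\textrm{Div}[{\bf f}]){\bf x}$.

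The statement as printed is false. Take $f={\bf x}$, so $f_0=0$ and $f_k=x_k$. Then $\nu[{\bf x}]=\sum_{k=1}^3x_k\partial_k{\bf x}={\bf x}$. On the other side, $\textrm{Rot}[{\bf f}]=0$, all scalar minors vanish, the three ${\bf e}_k$-lines add up to $-2{\bf x}$, and $\textrm{Div}[{\bf f}]=3$. So the printed right-hand side equals $-5{\bf x}$, whereas the corrected one gives $3{\bf x}-2{\bf x}={\bf x}$.

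The paper's proof contains the same slip, where $-\partial_0 f_0\,{\bf x}+\textrm{Div}[{\bf f}]\,{\bf x}$ is rewritten as $-(\partial_0 f_0+\textrm{Div}[{\bf f}]){\bf x}$. So carry out your computation as planned, but state and prove the corrected identity rather than forcing agreement with the printed one.
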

\begin{proof}
As  $\mathcal{D}[f]= \partial_0 f_0+ \textrm{Grad} f_0+ \partial_0 {\bf{f}} -  \textrm{Div}[{\bf{f}}]  + \textrm{Rot}[{\bf{f}}]  $ and using the second identity of the previous proposition  
 we have that
 {\begin{align*}
-{\bf {x}}\mathcal{D}[f]+\sum_{j=0}^{3}[{ \bf x},\textrm{Grad} f_j]{\bf{e}}_{j} &= -{\bf {x}} \big(\partial_0 f_0 + \textrm{Grad} f_0 + \partial_0 {\bf f} -  \textrm{Div}[{\bf f}]  + \textrm{Rot}[{\bf f}]\big)  \\ & +\sum_{j=0}^{3}[{ \bf x},\textrm{Grad} f_j]{\bf{e}}_{j}.
\end{align*}}
Then
 {
\begin{align*}
\nu[f] &=-\partial_{0} f_{0}  {\bf {x}} -{\bf {x}}\textrm{Grad} f_0 -{\bf {x}}\partial_0 {\bf f} +  \textrm{Div}[{\bf f}]{\bf {x}}  -{\bf {x}}\textrm{Rot}[{\bf f}]+\sum_{j=0}^{3}[{ \bf x},\textrm{Grad} f_j]{\bf{e}}_{j} \\ & =-\partial_{0} f_{0}  {\bf {x}} +\langle {\bf {x}},\textrm{Grad}f_{0}\rangle -[{\bf {x}},\textrm{Grad} f_{0}] +\langle {\bf {x}},\partial_0 {\bf f}\rangle -[{\bf {x}},\partial_0 {\bf f}]  \\ &+  \textrm{Div}[{\bf f}]{\bf {x}}  +\langle {\bf {x}},\textrm{Rot}[{\bf f}]\rangle -[{\bf {x}},\textrm{Rot}[{\bf f}]]  +[{ \bf x},\textrm{Grad} f_0]{\bf{e}}_{0}+\sum_{j=1}^{3}[{ \bf x},\textrm{Grad} f_j]{\bf{e}}_{j}\\ & =-\partial_{0} f_{0}  {\bf {x}} +\langle {\bf {x}},\textrm{Grad}f_{0}\rangle  +\langle {\bf {x}},\partial_0 {\bf f}\rangle -[{\bf {x}},\partial_0 {\bf f}]  +  \textrm{Div}[{\bf f}]{\bf {x}}  +\langle {\bf {x}},\textrm{Rot}[{\bf f}]\rangle\\ & -[{\bf {x}},\textrm{Rot}[{\bf f}]]  +[{ \bf x},\textrm{Grad} f_1]{\bf{e}}_{1}+[{ \bf x},\textrm{Grad} f_2]{\bf{e}}_{2}+[{ \bf x},\textrm{Grad} f_3]{\bf{e}}_{3}\\ & = \langle {\bf {x}},\textrm{Grad}f_{0}+\partial_0 {\bf f}+\textrm{Rot}[{\bf f}]\rangle-\partial_{0} f_{0}  {\bf {x}}  +  \textrm{Div}[{\bf f}]{\bf {x}} -[{\bf {x}},\partial_0 {\bf f}+\textrm{Rot}[{\bf f}]]  \\ &  +[{ \bf x},\textrm{Grad} f_1]{\bf{e}}_{1}+[{ \bf x},\textrm{Grad} f_2]{\bf{e}}_{2}+[{ \bf x},\textrm{Grad} f_3]{\bf{e}}_{3}\\ & = \langle {\bf {x}},\textrm{Grad}f_{0}+\partial_0 {\bf f}+\textrm{Rot}[{\bf f}]\rangle-(\partial_{0} f_{0}  +  \textrm{Div}[{\bf f}] ) {\bf {x}} -[{\bf {x}},\partial_0 {\bf f}+\textrm{Rot}[{\bf f}]]  \\ &  +((x_2 \partial_3 f_1 -x_3\partial_2 f_1) {\bf{e}}_{1}-(x_1 \partial_3 f_1 -x_3\partial_1 f_1){\bf{e}}_{2}+(x_1 \partial_2 f_1 -x_2\partial_1 f_1){\bf{e}}_{3}){\bf{e}}_{1}\\ &  +((x_2 \partial_3 f_2 -x_3\partial_2 f_2) {\bf{e}}_{1}-(x_1 \partial_3 f_2 -x_3\partial_1 f_2){\bf{e}}_{2}+(x_1 \partial_2 f_2 -x_2\partial_1 f_2){\bf{e}}_{3}){\bf{e}}_{2}\\ &  +((x_2 \partial_3 f_3 -x_3\partial_2 f_3) {\bf{e}}_{1}-(x_1 \partial_3 f_3 -x_3\partial_1 f_3){\bf{e}}_{2}+(x_1 \partial_2 f_3 -x_2\partial_1 f_3){\bf{e}}_{3}){\bf{e}}_{3},
\end{align*}
and
\begin{align*}
	\nu[f] &= \langle {\bf {x}} ,\textrm{Grad}f_{0}+\partial_0 {\bf{f}}+\textrm{Rot}[{\bf {f}}]\rangle-(x_2 \partial_3 f_1 -x_3\partial_2 f_1)\\ &+(x_1 \partial_3 f_2 -x_3\partial_1 f_2)-(x_1 \partial_2 f_3 -x_2\partial_1 f_3)\\ &-(\partial_{0} f_{0} +\textrm{Div}[{\bf f}]) {\bf {x}}   -[{\bf {x}},\partial_0 {\bf {f}}+\textrm{Rot}[{\bf{f}}]]   \\ & -((x_1 \partial_2 f_2 -x_2\partial_1 f_2)+(x_1 \partial_3 f_3 -x_3\partial_1 f_3)){\bf{e}}_{1}\\ &  +((x_1 \partial_2 f_1 -x_2\partial_1 f_1) -(x_2 \partial_3 f_3 -x_3\partial_2 f_3)){\bf{e}}_{2}\\ &  +((x_1 \partial_3 f_1 -x_3\partial_1 f_1) +(x_2 \partial_3 f_2 -x_3\partial_2 f_2)) {\bf{e}}_{3}.
\end{align*}}

\end{proof}}

\begin{rem}
 Using the identity $G={\bf{x}}\nu$,   we can also find a vector   interpretation of the operator $G$. For example:
 {\begin{align*}
	G[f]& =\|x\|^{2}\partial_{0}f_{0}-\langle {\bf{x}}, x_{1}\partial_{1} {\bf{f}}+x_{2}\partial_{2} {\bf{f}}+x_{3}\partial_{3}{\bf{f}} \rangle+\|x\|^{2}\partial_{0}{\bf{f}}\\ &+\big(x_{1} \partial_{1}f_{0}+x_{2} \partial_{2}f_{0}+x_{3} \partial_{3} f_{0}\big){\bf{x}}+\big[{\bf{x}}, x_{1}\partial_{1} {\bf{f}}+x_{2}\partial_{2} {\bf{f}}+x_{3}\partial_{3} {\bf{f}}\big],
\end{align*}}
 for all $f\in C^1(\Omega,\mathbb H)$.
\end{rem}

\begin{prop}\label{inverseThe} Let $\Omega \subset \mathbb H$ be a domain. If ${\bf{x}}\mathcal D[f](x) + 3 f(x)\in L^{2}(\Omega,\mathbb{H})$, then 
$$-2\nu [f]= \mathcal D \bigg[ { \bf x} f + \mathcal T[{ \bf x}\mathcal D[f] + 3 f]\bigg],$$ 
for all $f\in C^1(\Omega,\mathbb H)$.
\end{prop}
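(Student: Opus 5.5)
The plan is to combine the first identity of Proposition \ref{identities} with the right-inverse property \eqref{FueterInv} of the Teodorescu operator $\mathcal T$. By Proposition \ref{identities}(1), for every $f\in C^1(\Omega,\mathbb H)$ we have, pointwise on $\Omega$,
\begin{align*}
-2\nu[f] = \mathcal D[{\bf x} f] + \big({\bf x}\mathcal D[f] + 3f\big).
\end{align*}
So it suffices to write the second summand as the Fueter derivative of something.

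Set $h:={\bf x}\mathcal D[f]+3f$. By hypothesis $h\in L^2(\Omega,\mathbb H)$; since $f\in C^1$, $h$ is also continuous on $\Omega$. Thus $h$ lies in the class on which \eqref{FueterInv} holds, and $\mathcal D\circ\mathcal T[h]=h$. Substituting gives
\begin{align*}
-2\nu[f] = \mathcal D[{\bf x} f] + \mathcal D\big[\mathcal T[{\bf x}\mathcal D[f]+3f]\big] = \mathcal D\Big[{\bf x} f + \mathcal T[{\bf x}\mathcal D[f]+3f]\Big],
\end{align*}
where the last step uses only the additivity of $\mathcal D$. This is exactly the claimed identity.

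The only point needing care is whether \eqref{FueterInv} applies to $h$, i.e.\ whether $\mathcal T[h]$ is differentiable so that $\mathcal D$ acts on it classically. With $h$ merely in $L^2$, the identity $\mathcal D\mathcal T[h]=h$ holds in the weak (distributional) sense. With $h$ continuous, one may need a bit more regularity, such as H\"older continuity, which holds here if $f\in C^2$ or locally. I would simply invoke \eqref{FueterInv} as stated in the paper for $L_2\cup C$, and read the resulting equality in the same sense in which \eqref{FueterInv} is asserted. Consequently $\mathcal D[{\bf x}f+\mathcal T[h]]$ is understood accordingly, and the proof reduces to the two displayed lines above.
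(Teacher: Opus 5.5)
Your proof is correct and is essentially the paper's own argument: apply $\mathcal D\circ\mathcal T=\mathrm{id}$ from \eqref{FueterInv} to ${\bf x}\mathcal D[f]+3f$, then use the first identity of Proposition \ref{identities}. Your caveat about the sense in which \eqref{FueterInv} holds is a fair refinement that the paper leaves implicit: the identity is weak for $L^2$ data and classical under H\"older regularity, e.g.\ when $f\in C^2$.
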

\begin{proof}Using the property 
    \eqref{FueterInv} of the integral operator $\mathcal T$  we obtain 
\begin{align*}
	\mathcal{D}\bigg[ { \bf x} f + \mathcal T[{ \bf x}\mathcal D[f]   + 3 f]\bigg]=\mathcal{D}[{ \bf x} f ]+\mathcal{D}\bigg[\mathcal T[{ \bf x}\mathcal{D} [f]   + 3 f]\bigg]=\mathcal{D}[{ \bf x} f ]+{ \bf x}\mathcal{D}[f]   + 3 f. \\
\end{align*}
 and using the first equality in Proposition \ref{identities}  we get the result. 
\end{proof}

Given a domain $\Omega \subset \mathbb{H}$ such that  its boundary  is a 3-dimensional smooth surface. Next,  we will show the participation of the $\nu$ operator in important quaternionic integral formulas in which the function on the variable $x$ obtained by applying the operator $\mathcal T$ to the function $q\mapsto { \bf q}\mathcal D[f](q) + 3 f(q)$ on the variable $q$ is denoted by $\mathcal T\bigg[{\bf q}\mathcal D[f](q) + 3f(q)\bigg](x)$ for $f\in C^1(\Omega,\mathbb H) \cap C(\overline{\Omega},\mathbb H)$.

\begin{prop}\label{formulatipoBorel} 
Let $\Omega \subset \mathbb{H}$ be a domain such that  $\partial \Omega$ is a 3-dimensional smooth surface and $f\in C^1(\Omega,\mathbb H) \cap C(\overline{\Omega},\mathbb H)$. If ${\bf{x}}\mathcal D[f](x)   + 3 f(x)\in L^{2}(\Omega,\mathbb{H})$, then  
\begin{align*}  &  \int_{\partial \Omega}K (y-x)\sigma_{y}   \bigg( { \bf y} f (y) + \mathcal{T}\big[{ \bf{q}}\mathcal D[f](q)   + 3 f(q)\big]  (y) \bigg)     +2 
\int_{\Omega}  K  (y-x)   \nu [f] (y)  dy  \\
& -  \mathcal T\big[{ \bf q}\mathcal D[f] (q)  + 3 f (q)\big] (x) =  { \bf x} f(x) ,  
\end{align*} 
if  $x\in \Omega$. On the other hand, 
\begin{align*}  &  \int_{\partial \Omega} K (y-x)\sigma_{y}   \bigg( { \bf y} f (y) + \mathcal T[{ \bf q}\mathcal D[f](q)   + 3 f(q)]  (y) \bigg)    =- 2 
\int_{\Omega}  K  (y-x)   \nu [f] (y)  dy   ,
\end{align*} 
if  $ x\in \mathbb H\setminus\overline{\Omega}$. In addition,  
\begin{align*} \int_{\partial \Omega}  \sigma _x \bigg( 
{ \bf x} f (x) + \mathcal T\big[{ \bf q}\mathcal D[f](q)   + 3 f(q)\big]  (x)
\bigg)= 
 &  -2 \int_{\Omega } 
  \nu [f] dx.
\end{align*}
\end{prop}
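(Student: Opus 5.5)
The plan is to apply the Borel--Pompeiu formula \eqref{BorelHyp} and the Stokes formula \eqref{StokesHyp} to the single auxiliary function
$$h(x):={\bf x} f(x)+\mathcal T\big[{\bf q}\mathcal D[f](q)+3f(q)\big](x),$$
taking $g\equiv 0$ in \eqref{BorelHyp}. The only ingredient needed is the value of $\mathcal D[h]$. This is supplied by Proposition \ref{inverseThe}, which gives $\mathcal D[h]=-2\nu[f]$ on $\Omega$ under the hypothesis ${\bf x}\mathcal D[f]+3f\in L^2(\Omega,\mathbb H)$. That proposition rests on the first identity of Proposition \ref{identities} and on the right-inverse property \eqref{FueterInv}.

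First, substitute $f\mapsto h$ and $g\mapsto 0$ in \eqref{BorelHyp}. The volume term becomes $-\int_\Omega K(y-x)\mathcal D[h](y)\,dy=2\int_\Omega K(y-x)\nu[f](y)\,dy$. For $x\in\Omega$, the right-hand side is $h(x)={\bf x}f(x)+\mathcal T[{\bf q}\mathcal D[f]+3f](x)$. Moving the $\mathcal T$-term to the left side gives exactly the first displayed identity. For $x\in\mathbb H\setminus\overline\Omega$, the right-hand side is $0$, and rearranging gives the second identity.

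Next, for the Stokes identity, apply \eqref{StokesHyp} with $g\equiv1$ and $f\mapsto h$. Since $\mathcal D_r[1]=0$, we get $\int_{\partial\Omega}\sigma_x h=\int_\Omega\mathcal D[h]\,dx=-2\int_\Omega\nu[f]\,dx$, which is the third claim.

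The main obstacle is regularity. Both \eqref{BorelHyp} and \eqref{StokesHyp} are stated for functions in $C^1(\overline\Omega,\mathbb H)$, whereas $h$ only involves $f\in C^1(\Omega)\cap C(\overline\Omega)$ and a Teodorescu transform of an $L^2$ function.

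\begin{itemize}
\item I would first note that $\mathcal T$ of an $L^2$ (or continuous, bounded) density is continuous on all of $\mathbb H$, because the kernel $K$ is weakly singular of order $|y-x|^{-3}$ in $\mathbb R^4$. Hence $h$ is continuous up to $\partial\Omega$, and the boundary integrals make sense.
\item To justify the formulas, I would exhaust $\Omega$ by smooth subdomains $\Omega_\varepsilon\Subset\Omega$, on which $h$ has enough regularity, in the weak or Sobolev sense $W^{1,2}$ via \eqref{FueterInv}. I would apply both formulas there and pass to the limit $\varepsilon\to0$. This uses continuity of $h$ on $\overline\Omega$ and integrability of $\nu[f]=-\tfrac12\mathcal D[h]$.
\end{itemize}

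In keeping with the paper's level of rigor, I would state this approximation briefly and treat \eqref{BorelHyp} and \eqref{StokesHyp} as applicable to $h$.
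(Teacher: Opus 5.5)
Your proposal is the paper's proof. It applies Borel--Pompeiu \eqref{BorelHyp} with $g\equiv 0$ to $h={\bf x}f+\mathcal T[{\bf q}\mathcal D[f]+3f]$, uses Proposition \ref{inverseThe} to replace $\mathcal D[h]$ by $-2\nu[f]$, rearranges in the cases $x\in\Omega$ and $x\in\mathbb H\setminus\overline\Omega$, and applies Stokes \eqref{StokesHyp} with $g\equiv 1$.

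One correction to your regularity aside, which the paper omits altogether: $\mathcal T$ of an $L^2$ density need not be continuous. Indeed $|K(y-x)|=\frac{1}{2\pi^2}|y-x|^{-3}$ lies in $L^q_{\mathrm{loc}}(\mathbb R^4)$ only for $q<4/3$, so $\mathcal T$ maps $L^2$ densities into $W^{1,2}$. Continuity is guaranteed only for densities in $L^p$ with $p>4$, or bounded ones. Consequently continuity of $h$ up to $\partial\Omega$ does not follow, and the limit $\Omega_\varepsilon\to\Omega$ should be justified through $L^2$ traces of $\mathcal T[{\bf q}\mathcal D[f]+3f]$ on $\partial\Omega_\varepsilon$ instead.
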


\begin{proof}
 {Suppose $f\in C^1(\Omega,\mathbb H) \cap C(\overline{\Omega},\mathbb H)$. From   Borel$-$Pompeiu formula, see \eqref{BorelHyp} and  the function ${\bf{x}}f(x)+ \mathcal T[{ \bf q}\mathcal D[f] (q)  + 3 f(q)]  (x)$, we have that
\begin{align*}  &  \int_{\partial \Omega} K (y-x)\sigma_{y}   \bigg({\bf{y}}f(y)+ \mathcal T\bigg[{ \bf q}\mathcal D[f] (q)  + 3 f(q)\bigg]\bigg)(y)  \\ 
	&  - 
	\int_{\Omega}  K  (y-x)  \mathcal D \bigg[{\bf{x}}f(x)+ \mathcal T\big[\ { \bf q}\mathcal D[f](q)   + 3 f (q)\ \big] \bigg] (y)   \nonumber \\
	=  &  \left\{ \begin{array}{ll}  {\bf{x}}f(x)+ \mathcal T\big[{ \bf q}\mathcal D[f](q) 
	  + 3 f (q)\big]  (x) , &  x\in \Omega,  \\ 0 , &  x\in \mathbb H\setminus\overline{\Omega}.                     
	\end{array} \right. 
\end{align*}
 Proposition \ref{inverseThe} implies that
	\begin{align*}
			\int_{\partial \Omega}  K  (y-x)  \mathcal D \bigg[{\bf{x}}f(x)+ \mathcal T[{ \bf q}\mathcal D[f] (q)  + 3 f(q)]\bigg] (y)  
		= -2
		\int_{\Omega}K  (y-x)  \nu[f](y). 
	\end{align*}
	In addition,  if $x\in \Omega$
\begin{align*}  &  \int_{\partial \Omega} K (y-x)\sigma_{y} \bigg( {\bf{y}}f(y)+ \mathcal T\bigg[{ \bf q}\mathcal D[f](q)   + 3 f(q)\bigg]  (y) \bigg)  +2
	\int_{\Omega}  K  (y-x)  \nu[f] (y)   \nonumber \\
	=  &   {\bf{x}}f(x)+ \mathcal T\big[{ \bf q}\mathcal D[f](q)   + 3 f(q)\big]  (x).                     
\end{align*}
In the other hand,  {if $ x\in \mathbb H\setminus\overline{\Omega}$, then} 
\begin{align*} \int_{\partial \Omega}  \sigma _x \bigg( 
	{ \bf x} f (x) + \mathcal T\bigg[{ \bf q}\mathcal D[f](q)   + 3 f(q)\bigg]  (x)
	\bigg)+2 \int_{\Omega } 
	\nu [f] dx=0.
	\end{align*}
Therefore, 
\begin{align*}  &  \int_{\partial \Omega} K (y-x)\sigma_{y}   \bigg( { \bf y} f (y) + \mathcal T\bigg[{ \bf q}\mathcal D[f] (q)  + 3 f(q)\bigg]  (y) \bigg)     +2 
	\int_{\Omega}  K  (y-x)   \nu [f] (y)  dy  \\
	& -  \mathcal T\bigg[{ \bf q}\mathcal D[f](q)   + 3 f(q)\bigg] (x) =  { \bf x} f(x) ,  
\end{align*} 
if  $x\in \Omega$, and  
\begin{align*}  &  \int_{\partial \Omega} K (y-x)\sigma_{y}   \bigg(
 { \bf y} f (y) + \mathcal T\bigg[{ \bf q}\mathcal D[f](q)   + 3 f(q)\bigg]  (y) \bigg)    =- 2 
	\int_{\Omega}  K  (y-x)   \nu [f] (y)  dy   ,
\end{align*} 
if  $ x\in \mathbb H\setminus\overline{\Omega}$.
\\
Furthermore, using  Stokes' formula, see \ref{StokesHyp}, with the  functions  1 and ${ \bf x} f (x)+T[{ \bf q}\mathcal D[f](q)   + 3 f(q)](x)$, we have that
\begin{align*} & \int_{\partial \Omega}  \sigma _x \bigg({ \bf x} f (x)+T\bigg[{ \bf q}\mathcal D[f] (q)  + 3 f(q)\bigg](x)\bigg) \\ &=     \int_{\Omega }   
	\mathcal  D\bigg[{ \bf x} f (x)+T\big[{ \bf q}\mathcal D[f](q)   + 3 f(q)\big]\bigg](x)  =-2 \int_{\Omega } 
	\nu [f] dx.
\end{align*}} 
\end{proof}

\begin{cor}\label{formulatipeCauchy}   {Let $\Omega\subset \mathbb H$ be a domain such that $\partial \Omega$ is a 3-dimensional smooth surface.} If $f\in C^1(\Omega,\mathbb H) \cap C(\overline{\Omega},\mathbb H)\cap\textrm{Ker} (\nu)$ and ${\bf{x}}\mathcal D[f](x) + 3 f(x)\in L^{2}(\Omega,\mathbb{H})$,  then  
\begin{align*}  &  \int_{\partial \Omega}K (y-x)\sigma_{y}   \bigg( { \bf y} f (y) + \mathcal T[{ \bf q}\mathcal D[f](q)   + 3 f(q)]  (y) \bigg)     -  \mathcal T[{ \bf q}\mathcal D[f](q)   + 3 f(q)] (x) =  { \bf x} f(x) ,  
\end{align*} 
if  $x\in \Omega$. On the other hand, 
\begin{align*}  &  \int_{\partial \Omega}K (y-x)\sigma_{y}   \bigg( { \bf y} f (y) + \mathcal T[{ \bf q}\mathcal D[f] (q)  + 3 f(q)]  (y)  \bigg)    =- 2 
\int_{\Omega}  K  (y-x)   \nu [f] (y)  dy   ,
\end{align*} 
if  $ x\in \mathbb H\setminus\overline{\Omega}$. Furthermore,   {if   $f\in \textrm{Ker}(\nu)$,  then}
\begin{align*} \int_{\partial \Omega}  \sigma _x \bigg( 
{ \bf x} f (x) + \mathcal T[{ \bf q}\mathcal D[f] (q)  + 3 f(q)]  (x)
\bigg)= 0.
\end{align*}
 \end{cor}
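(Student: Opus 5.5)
This corollary is a direct specialization of Proposition \ref{formulatipoBorel}, so the plan is to substitute the hypothesis $f\in\textrm{Ker}(\nu)$ into each of the three identities established there. All hypotheses of that proposition hold: $\Omega$ has smooth boundary, $f\in C^1(\Omega,\mathbb H)\cap C(\overline{\Omega},\mathbb H)$, and ${\bf x}\mathcal D[f]+3f\in L^2(\Omega,\mathbb H)$. Hence all three formulas of Proposition \ref{formulatipoBorel} are available without further work.

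First, for $x\in\Omega$, the first formula of Proposition \ref{formulatipoBorel} contains the volume term $2\int_\Omega K(y-x)\,\nu[f](y)\,dy$. Since $\nu[f]\equiv 0$ on $\Omega$, this integral vanishes identically. What remains is exactly the claimed Cauchy-type identity
$$\int_{\partial\Omega}K(y-x)\sigma_y\big({\bf y}f(y)+\mathcal T[{\bf q}\mathcal D[f](q)+3f(q)](y)\big)-\mathcal T[{\bf q}\mathcal D[f](q)+3f(q)](x)={\bf x}f(x).$$

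Second, for $x\in\mathbb H\setminus\overline{\Omega}$, the statement is copied verbatim from the second formula of Proposition \ref{formulatipoBorel}. Note additionally that its right-hand side is $0$ because $\nu[f]=0$, so the boundary integral vanishes outside $\overline\Omega$. Third, the Stokes-type identity of Proposition \ref{formulatipoBorel} has right-hand side $-2\int_\Omega\nu[f]\,dx$, which is zero under the same hypothesis, giving the last claim.

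The only point requiring care is that the volume integrals are well defined and genuinely vanish. This is immediate, since the integrand is identically zero on $\Omega$; no limiting argument is needed. If one prefers to phrase the hypothesis via $G$, one may also invoke \eqref{gnu}, since $\textrm{Ker}(G)\cap C^1=\textrm{Ker}(\nu)\cap C^1$. By Theorem \ref{teoHGM} this covers in particular slice regular $f$ on axially symmetric s-domains.
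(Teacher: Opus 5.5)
Your proposal is correct and follows the paper's route exactly. The paper's proof simply says the corollary follows from Proposition~\ref{formulatipoBorel}, and you make this explicit by setting $\nu[f]\equiv 0$ in its three identities: this removes the volume term for $x\in\Omega$ and makes the right-hand sides of the exterior identity and the Stokes-type identity vanish.
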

\begin{proof}Follows from Proposition \ref{formulatipoBorel}.
\end{proof}

 \begin{prop}\label{identitiesLaplaceone}  {Let $\Omega\subset \mathbb H$ be a domain. If $f\in C^2(\Omega,\mathbb H)$,  then the following identities hold}
\begin{itemize}
	\item  [1.] $
		-2 \overline{\mathcal D}\bigg[ \nu [f]\bigg]= \overline{\mathcal D}\bigg[{ \bf x}\mathcal D[f]\bigg]+\Delta_{\mathbb R^4}[ { \bf x} f] + 3 \overline{\mathcal D} f$, 
	\item [2.] $
		\Delta_{\mathbb{R}^{4}} [f] =\overline{\mathcal D}\bigg[\dfrac{{ \bf x}}{||{ \bf x}||^{2}}\bigg(\nu [f] -\sum_{k=0}^3[ {\bf x}, \textrm{Grad} f_k  ]{\bf e}_k\bigg)\bigg]$ 
				on $\{x\in  \Omega  \ \mid \ {\bf x}\neq 0\}$.
\end{itemize} 
\end{prop}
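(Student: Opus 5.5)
Both identities will be obtained by applying the conjugate Fueter operator $\overline{\mathcal D}:=\sum_{k=0}^3 \overline{{\bf e}_k}\,\partial_k=\partial_0-\sum_{k=1}^3{\bf e}_k\partial_k$ to the identities of Proposition \ref{identities}. The one ingredient not yet stated in the paper is the factorization of the Laplacian,
\begin{align*}
\overline{\mathcal D}\,\mathcal D=\mathcal D\,\overline{\mathcal D}=\Delta_{\mathbb R^4}\quad\textrm{on } C^2(\Omega,\mathbb H).
\end{align*}
I will verify it first. Since $\overline{\mathcal D}$ has constant coefficients, it commutes with left multiplication by constant quaternions. Expanding the product gives
\begin{align*}
\sum_{j,k}\overline{{\bf e}_j}{\bf e}_k\partial_j\partial_k .
\end{align*}
The diagonal terms yield $\sum_k\partial_k^2$. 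In the mixed terms, $\overline{{\bf e}_j}{\bf e}_k+\overline{{\bf e}_k}{\bf e}_j=0$ for $j\neq k$; this holds both for pairs with $j=0$ and for pairs with $j,k\ge1$, using ${\bf e}_j{\bf e}_k=-{\bf e}_k{\bf e}_j$. Because $f\in C^2$, the mixed partials commute, so these terms cancel. This is the standard fact cited from \cite{shapiro1,shapiro2}. $\overline{\mathcal D}$ is also real-linear and additive.

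For item 1, I start from item 1 of Proposition \ref{identities}, $-2\nu[f]={\bf x}\mathcal D[f]+\mathcal D[{\bf x}f]+3f$, which holds on $\Omega$ for $f\in C^1$. Since $f\in C^2$, every term is $C^1$, so I may apply $\overline{\mathcal D}$ termwise. The middle term becomes $\overline{\mathcal D}\mathcal D[{\bf x}f]=\Delta_{\mathbb R^4}[{\bf x}f]$; here ${\bf x}f\in C^2$ because ${\bf x}$ is polynomial. The other two terms are already in the required form. This gives item 1 directly.

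For item 2, I work on the open set $\{x\in\Omega\mid{\bf x}\neq0\}$. Item 2 of Proposition \ref{identities} rearranges to
\begin{align*}
{\bf x}\mathcal D[f]=-\Big(\nu[f]-\sum_{k=0}^3[{\bf x},\textrm{Grad} f_k]{\bf e}_k\Big).
\end{align*}
Since ${\bf x}^{-1}=-{\bf x}/\|{\bf x}\|^2$, left multiplication by ${\bf x}^{-1}$ gives
\begin{align*}
\mathcal D[f]=\frac{{\bf x}}{\|{\bf x}\|^2}\Big(\nu[f]-\sum_{k=0}^3[{\bf x},\textrm{Grad} f_k]{\bf e}_k\Big).
\end{align*}
Applying $\overline{\mathcal D}$ and using the factorization $\overline{\mathcal D}\mathcal D=\Delta_{\mathbb R^4}$ then yields item 2. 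The right-hand side is $C^1$ there, because ${\bf x}/\|{\bf x}\|^2$ is smooth away from ${\bf x}=0$.

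The main obstacle is only careful bookkeeping of signs and sides. First, the factor ${\bf x}^{-1}$ must be multiplied on the left, consistent with how ${\bf x}$ appears in Proposition \ref{identities}. Second, the sign $\overline{{\bf x}}=-{\bf x}$ must be tracked correctly, so that the minus from ${\bf x}^{-1}=-{\bf x}/\|{\bf x}\|^2$ cancels the minus coming from $\nu[f]=-{\bf x}\mathcal D[f]+\dots$. Apart from this, the argument needs only the factorization and the identities already proved.
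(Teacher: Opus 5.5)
Your argument is correct and is essentially the paper's proof: you apply $\overline{\mathcal D}$ to the two identities of Proposition \ref{identities}, use $\overline{\mathcal D}\mathcal D=\Delta_{\mathbb R^4}$, and for item 2 first solve for $\mathcal D[f]$ by left multiplication with ${\bf x}^{-1}=-{\bf x}/\|{\bf x}\|^2$; your signs agree with the paper's expression. One harmless slip in your check of the factorization: $\overline{\mathcal D}$ does not commute with left multiplication by constant quaternions (only with right multiplication), but your expansion only needs that each $\partial_j$ does, and that is true.
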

\begin{proof} \ \ 
	\begin{itemize}
		\item [1.] The first identity of Proposition \ref{identities} and the decomposition 
		$\overline{\mathcal D}\circ \mathcal{D} = \Delta_{\mathbb R^4}$  {imply} that 
		 \begin{align*}
			-2 \overline{\mathcal D}\bigg[ \nu [f]\bigg]&=\overline{\mathcal D}\bigg[{\bf{x}} \mathcal{D}[f]+\mathcal{D}[{\bf{x}} f]+3f\bigg]\\ &=\overline{\mathcal D}\bigg[{\bf{x}} \mathcal{D}[f]\bigg]+\Delta_{\mathbb R^4}\big[{\bf{x}} f\big]+3\overline{\mathcal D}\big[f\big].
		\end{align*}
		\item [2.]  {From the second identity in Proposition \ref{identities}, we have that}
		\begin{align*}
			\mathcal D[f] =-\frac{{ \bf x}}{||{ \bf x}||^{2}} \bigg(\sum_{k=0}^3[ {\bf x}, \textrm{Grad} f_k  ]{\bf e}_k- \nu[f]\bigg)   
		\end{align*} 
		and applying $\overline{\mathcal D}$ on both sides we see that 
		\begin{align*}
			\Delta_{\mathbb{R}^{4}} [f] =			\overline{\mathcal D}\big[\mathcal D[f]\big] =&-\overline{\mathcal D}\bigg[\frac{{ \bf x}}{||{ \bf x}||^{2}}\bigg(\sum_{k=0}^3[ {\bf x}, \textrm{Grad} f_k  ]{\bf e}_k- \nu[f] \bigg)\bigg].  
		\end{align*}  
	 	\end{itemize}
\end{proof}

\begin{rem}
 {Given $x \in \mathbb H$, from direct computations we can see that} 
\begin{align}\label{sumconj}
\sum_{k=0}^3 e_k x e_k = &  q - x_0- x_1e_1 + x_2e_2 + x_3e_3   
- x_0+ x_1e_1 - x_2e_2 + x_3e_3 - x_0\nonumber \\ 
 & + x_1e_1 + x_2e_2 - x_3e_3 = -2x_0  +2x_1e_1 +2  x_2e_2 +  2  x_3e_3 \nonumber\\
 = & -2\overline{x}   .
\end{align}
\end{rem}
\begin{prop}\label{GDkerConstant}  
 $Ker(G)\cap Ker (\mathcal D) \cap C^1(\mathbb B^4(0,1),\mathbb H)$ consists only of quaternionic constant functions.
\end{prop}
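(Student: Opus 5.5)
The plan is to show first that any $f$ in the intersection is slice regular on $\mathbb B^4(0,1)$, then expand it in a power series and let $\mathcal D[f]=0$ kill every non-constant coefficient.

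\textbf{Step 1: $f$ is slice regular.} By \eqref{gnu}, $f\in \textrm{Ker}(\nu)$. Take a point with ${\bf x}\neq 0$ and write $x=u+{\bf i}r$ with $r=\|{\bf x}\|>0$ and ${\bf i}={\bf x}/r\in\mathbb S^2$. Then $\sum_{k=1}^3 x_k\partial_k = r\partial_r$, so
\[
\nu = r(-{\bf i}\partial_u+\partial_r).
\]
Hence $\nu[f]=0$ gives $\partial_r f={\bf i}\partial_u f$. Multiplying on the left by $-{\bf i}$ gives $(\partial_u+{\bf i}\partial_r)f=0$, which is the slice Cauchy–Riemann equation on the open half-disc $\{u+{\bf i}r: r>0\}\cap\mathbb B^4(0,1)$.

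A point $u+{\bf i}v$ with $v<0$ equals $u+(-{\bf i})|v|$. Applying the same computation with $-{\bf i}$ shows that $f|_{\mathbb C({\bf i})}$ satisfies $\frac12(\partial_u+{\bf i}\partial_v)f=0$ on both half-discs. This function is $C^1$ on the whole slice, so its derivatives are continuous across the real segment and the equation holds there too. (Alternatively, continuity plus Morera's theorem gives the same conclusion.)

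So $f|_{\Omega_{\bf i}}$ is holomorphic for every ${\bf i}\in\mathbb S^2$, that is, $f\in\mathcal{SR}(\mathbb B^4(0,1))$. I expect this to be the main obstacle. Theorem \ref{teoHGM} only gives the equality $\mathcal{SR}=\textrm{Ker}(G)\cap C^1$ away from the real axis, which is why the regularity across the real axis has to be handled directly as above.

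\textbf{Step 2: power series.} By the series representation theorem, $f(q)=\sum_{n\ge0}q^na_n$ on $\mathbb B^4(0,1)$, and the series converges locally uniformly together with its derivatives. Therefore
\[
0=\mathcal D[f]=\sum_{n\ge1}\mathcal D[q^n]\,a_n .
\]
Each $\mathcal D[q^n]$ is a homogeneous polynomial of degree $n-1$ in $(x_0,x_1,x_2,x_3)$. By uniqueness of the expansion into homogeneous parts, $\mathcal D[q^n]\,a_n=0$ for every $n\ge1$.

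\textbf{Step 3: each $\mathcal D[q^n]$ is nonzero at real points.} One has the formula
\[
\mathcal D[q^n]=-2\sum_{k=1}^n \bar q^{\,k-1}q^{n-k},
\]
which can be checked by induction on $n$, or by reading it off from the Fueter-type identity $\mathcal D[f]=-2\,\partial_s f$ for slice regular $f$. Evaluating at a real point $q=t\in(0,1)$ gives
\[
\mathcal D[q^n](t)=-2n\,t^{n-1}\neq0.
\]
A nonzero quaternion is invertible, so $a_n=0$ for all $n\ge1$.

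Hence $f\equiv a_0$ is constant. Conversely, every quaternionic constant lies in $\textrm{Ker}(G)\cap\textrm{Ker}(\mathcal D)$, which completes the proof.
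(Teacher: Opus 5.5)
Your proof is correct, and its core matches the paper's: expand $f=\sum_{n\ge 0}x^n a_n$, apply $\mathcal D$ termwise, and restrict to the real axis, where $\mathcal D[x^n]$ becomes $-2n t^{n-1}$. The routes differ in two places.

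First, the paper gets the series expansion by simply asserting it for $f\in \mathrm{Ker}(G)\cap C^1(\mathbb B^4(0,1),\mathbb H)$. But Theorem~\ref{teoHGM} only gives $\mathcal{SR}\subseteq \mathrm{Ker}(G)\cap C^1$ on domains meeting the real axis. Your Step 1 supplies exactly the missing reverse inclusion: you derive the slice Cauchy--Riemann equation on both half-discs from $\nu[f]=0$, then extend it across the real segment by continuity of the first derivatives. On this point your argument is more complete than the paper's.

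Second, the paper does not use $\mathcal D[f]=0$ directly. Via Proposition~\ref{inverseThe} it combines it with $\nu[f]=0$ into $\sum_{k=0}^3 e_k{\bf x}\,\partial_k f=0$, which is just $-2\nu[f]-{\bf x}\mathcal D[f]=0$. It then computes this on $x^n$, cancels the factor $2{\bf x}$, and sets $x=\lambda\in(-1,1)$. Since ${\bf x}=0$ there, this needs a continuity step the paper does not state. It obtains $f'(\lambda)=0$ and concludes with the identity principle.

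You instead separate homogeneous components and kill each $a_n$ by evaluating $\mathcal D[q^n]a_n=0$ at one real point. This avoids both the division by ${\bf x}$ and the identity principle. Your homogeneous-decomposition step could even be skipped: $0=\mathcal D[f](t)=-2f'(t)$ on $(0,1)$ already forces $a_n=0$ for $n\ge 1$ by uniqueness of real power series coefficients.
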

\begin{proof}
  Given   $f\in Ker(G)\cap Ker( \mathcal D) \cap C^1(\mathbb B^4(0,1),\mathbb H)$. Then using   $G={\bf{x}}\nu$  and Proposition 
 \ref{inverseThe}
we obtain  
\begin{align}\label{ecua10101}
0= &  \mathcal D \bigg[ { \bf x} f(x) + \mathcal T[  3 f](x) \bigg] = \mathcal D  [{ \bf x} f  (x)] +  3 f(x) \nonumber\\
 = &   -3 f + \sum_{k=0}^3e_k  { \bf x}(\partial _k f ) (x) +  3 f(x) =   \sum_{k=0}^3 e_k  { \bf x}(\partial _k f ) (x) , 
\quad \forall x\in  \mathbb B^4(0,1).
\end{align}
 As $f\in Ker(G)\cap C^1( \mathbb B^4(0,1),\mathbb H)$, then there exists  a sequence of quaternions $(a_n)_{n\geq 0}$ such that 
$$  f(x) = \sum_{n=0}^{\infty } x^n a_n, \quad \forall x\in \mathbb B^4(0,1). $$ 
Then \eqref{ecua10101} and the uniform convergence of  { $\big( \sum_{n=0}^{m} x^n a_n \big)_{m\geq 0}$} to $f$, inherited from   complex analysis on slices,  {give us} that 
$$ \lim_{m\to \infty }\sum_{n=0}^{m }\sum_{k=0}^3 e_k  {\bf x} \partial _k ( x^n)  a_n =0 ,\quad \forall x\in   \mathbb B^4(0,1),$$
or equivalently
$$ \lim_{m\to \infty }\sum_{n=0}^{m }\sum_{k=0}^3 e_k  {\bf x} (e_k x^{n-1} + x e_k x^{n-2}  + \cdots + x^{n-1} e_k  )a_n =0 ,$$  
 for all $x\in   \mathbb B^4(0,1)$. From \eqref{sumconj}, we obtain that  
 \begin{align*}
& \sum_{k=0}^3 e_k  {\bf x} (e_k x^{n-1} + x e_k x^{n-2}  + \cdots + x^{n-1} e_k  ) \\
=& \sum_{k=0}^3 e_k  {\bf x}  e_k x^{n-1} + \sum_{k=0}^3 e_k  {\bf x} x e_k x^{n-2}  + \cdots + \sum_{k=0}^3 e_k  {\bf x} x^{n-1} e_k  \\
=& -2 \overline{ {\bf x}  }  x^{n-1} -2 \overline{ {\bf x} x}   x^{n-2}  - \cdots -2 \overline{    {\bf x} x^{n-1} }  \\
=&  2\left[   {\bf x}     x^{n-1} +\overline{ x} {\bf x}   x^{n-2}  + \cdots + \overline{   x}^{n-1}   {\bf x} \right]. 
 \end{align*}
 {Then
\begin{align*}0= & \lim_{m\to \infty }\bigg(\sum_{n=0}^{m } 2\left[   {\bf x}     x^{n-1} +\overline{ x} {\bf x}   x^{n-2}  + \cdots + \overline{   x}^{n-1}   {\bf x} \right] a_n\bigg) \\
= &  2{\bf x} \lim_{m\to \infty }\bigg(\sum_{n=0}^{m }  \left[        x^{n-1} +\overline{ x}     x^{n-2}  + \cdots + \overline{   x}^{n-1}     \right] a_{n}\bigg),
\end{align*}
for all $x\in   \mathbb B^4(0,1)$. } 

 {Therefore, 
\begin{align*}0=    &  \lim_{m\to \infty }\bigg(\sum_{n=0}^{m }  \left[        x^{n-1} +\overline{ x}     x^{n-2}  + \cdots + \overline{   x}^{n-1}     \right] a_n\bigg),
\end{align*}
for all $x\in   \mathbb B^4(0,1)$.
In particular, if 
  $x= \lambda\in (-1,1)$, then  
 \begin{align*}0=    &  \lim_{m\to \infty }\bigg(\sum_{n=0}^{m }  n\lambda ^{n-1}   a_n \bigg)=
  \lim_{m\to \infty }\bigg(\sum_{n=0}^{m }   \lambda ^{n}   a_n \bigg)' = f'(\lambda).
 \end{align*}
 As $f'$ is the zero function on $(-1,1)$, then  the principle of identity of slice  regular functions, we conclude that $f' $ is the zero function on $\mathbb B^4(0,1)$. Therefore, $f$ is a constant function.}
\end{proof}

\subsection{On the slice regular function theory}\label{srfunction}

We will study the behavior of Fueter and Laplace operators on slice regular functions.

\begin{cor}\label{SRGYD} \ {}
\begin{enumerate}
\item    {If $\Omega\subset \mathbb H$ is an axially symmetric s-domain and    $f\in \mathcal {SR}(\Omega)$,  then}  
\begin{align*} 
{ \bf x}\mathcal D[f]+\mathcal D[ { \bf x} f] + 3 f =  0,\\
 { \bf x}\mathcal D[f]=   \sum_{k=0}^3[{\bf x},\textrm{Grad} f_k  ]{\bf e}_k,
 \end{align*}  on $\Omega$.
\item   Let $\Omega\subset \mathbb H$ 
 be an  symmetric s-domain which does not intersect the real axis,  then the following facts are equivalents:
 \begin{enumerate}   
\item  $f\in \mathcal {SR}(\Omega)$. 
\item  ${\bf x}\mathcal D[f]+\mathcal D[ { \bf x} f] + 3 f =0 $ on $\Omega$.
\item  ${\bf x}\mathcal D[f]=\sum_{k=0}^3[{\bf x},\textrm{Grad} f_k  ]{\bf e}_k$  on $\Omega$.
\end{enumerate}  
\end{enumerate}  
\end{cor}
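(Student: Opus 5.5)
The statement follows from combining Theorem \ref{teoHGM} with the two identities of Proposition \ref{identities} and the kernel identity \eqref{gnu}.

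For part 1, take $f\in\mathcal{SR}(\Omega)$ on an axially symmetric s-domain $\Omega$. By Theorem \ref{teoHGM}(1), $f\in \textrm{Ker}(G)\cap C^1(\Omega,\mathbb H)$, and hence by \eqref{gnu} $\nu[f]=0$ on $\Omega$. Substituting $\nu[f]=0$ into the first identity of Proposition \ref{identities} gives
\begin{align*}
{\bf x}\mathcal D[f]+\mathcal D[{\bf x}f]+3f=-2\nu[f]=0 .
\end{align*}
Substituting it into the second identity gives
\begin{align*}
0=-{\bf x}\mathcal D[f]+\sum_{k=0}^3[{\bf x},\textrm{Grad} f_k]{\bf e}_k ,
\end{align*}
which is the second claimed equation.

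For part 2, $\Omega$ is a symmetric s-domain that does not meet the real axis. Theorem \ref{teoHGM}(2) upgrades the inclusion to the equality $\mathcal{SR}(\Omega)=\textrm{Ker}(G)\cap C^1(\Omega,\mathbb H)$, and \eqref{gnu} turns this into $\mathcal{SR}(\Omega)=\textrm{Ker}(\nu)\cap C^1(\Omega,\mathbb H)$. Proposition \ref{identities}(1) shows that $\nu[f]=0$ if and only if ${\bf x}\mathcal D[f]+\mathcal D[{\bf x}f]+3f=0$. Proposition \ref{identities}(2) shows that $\nu[f]=0$ if and only if ${\bf x}\mathcal D[f]=\sum_{k}[{\bf x},\textrm{Grad} f_k]{\bf e}_k$. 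Therefore (a), (b) and (c) are each equivalent to $f\in \textrm{Ker}(\nu)\cap C^1(\Omega,\mathbb H)$, and so to one another. We assume $f$ is $C^1$ throughout, as the theorem requires.

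The only delicate point is the relation between $\textrm{Ker}(G)$ and $\textrm{Ker}(\nu)$ stated in \eqref{gnu}. Since $G={\bf x}\nu$, the two kernels agree exactly where ${\bf x}\neq 0$, that is, off the real axis. In part 2 this causes no problem, because $\Omega$ avoids the real axis. In part 1 the inclusion $\textrm{Ker}(G)\subset\textrm{Ker}(\nu)$ on $\Omega\setminus\mathbb R$ extends to real points by continuity of $\nu[f]$, since $\Omega\setminus\mathbb R$ is dense in $\Omega$. Equivalently, we can rely directly on \eqref{gnu} as stated.
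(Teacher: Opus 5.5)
Your proof is correct and is essentially the paper's own argument, which simply cites Theorem \ref{teoHGM} and Proposition \ref{identities}: slice regularity puts $f$ in $\textrm{Ker}(G)\cap C^1(\Omega,\mathbb H)=\textrm{Ker}(\nu)\cap C^1(\Omega,\mathbb H)$, and the two identities of Proposition \ref{identities} then give the stated equations in part 1 and characterize slice regularity in part 2. Your density argument at real points is valid but unnecessary: every coefficient of $\nu$ (namely $-{\bf x}$, $x_1$, $x_2$, $x_3$) vanishes on $\mathbb R$, so $\nu[f]=0$ there for any $C^1$ function, and the kernels of $G$ and $\nu$ coincide pointwise on all of $\Omega$.
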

\begin{proof} 
Direct consequences of Theorem \ref{teoHGM} and Proposition \ref{identities}. 
\end{proof}

\begin{rem} 
Let $\Omega\subset \mathbb H$ be an axially symmetric s-domain and $f\in \mathcal {SR}(\Omega)$, then another vector version of fact 1 is:
 \begin{align*}
 -3f_0 =& -\partial_0\langle {\bf x}, {\bf f}\rangle_{\mathbb R^3}- \textrm{Div}( f_0{\bf x} + [{\bf x}, {\bf f}]_{\mathbb R^3} ) - \langle {\bf x}, \partial_0 {\bf f} + \textrm{Grad}[f_0]+ \textrm{Rot}[{\bf f}]\rangle_{\mathbb R^3}, \\
-3{\bf f} =& \partial_0 ( 2 f_0 {\bf x}  + [{\bf x}, {\bf f}]_{\mathbb R^3}  ) - 
\textrm{Grad}(\langle {\bf x}, {\bf f}\rangle_{\mathbb R^3} )+ \textrm{Rot}( f_0 {\bf x}+ [{\bf x}, {\bf f}]_{\mathbb R^3}  )   + \textrm{Div} [\bf f]  \ {\bf x} - 
[  {\bf x} , \partial_0 {\bf f } + \textrm{Rot} [\bf f] ]_{\mathbb R^3}.
\end{align*}

On the other hand, given $f\in \mathcal {SR}(\mathbb B^4(0,1))$ a non constant function,  suppose that ${\bf x}$  and $\textrm{Grad} f_k (x)$ are two collinear vectors for all $x\in \mathbb B^4(0,1)$ and all  $k\in \{0,1,2,3\} $, or equivalently,
$$[{\bf x},\textrm{Grad} f_k (x) ]= 0 ,\quad \forall x\in \mathbb B^4(0,1),$$
for all $k\in \{0,1,2,3\} $.  Then Proposition \ref{GDkerConstant} implies that ${\bf x}\mathcal D[f] = 0$  on $\mathbb B^4(0,1)$. Therefore, $\mathcal D[f] = 0$  
on $\mathbb B^4(0,1)$. But Proposition \ref{GDkerConstant}  shows that $f$ must be a constant function, which is false. So there is an open set   $U\subset \mathbb B^4(0,1)$ and 
$k\in \{0,1,2,3\} $ such that ${\bf x}$ and $\textrm{Grad} f_k (x)$ are  {not} collinear vectors  for  all $x\in U$. 
\end{rem}

\begin{rem} A vector interpretation  of the slice regular functions. 
 If $\Omega\subset \mathbb H$ is an axially symmetric s-domain, then Corollary \ref{vectorInter} allows us to see that $f\in \mathcal {SR}(\Omega)$ if  
 \begin{align*}
	&  \langle {\bf{x}} ,\textrm{Grad}f_{0}+\partial_0 {\bf{f}}+\textrm{Rot}[{\bf {f}}]\rangle-\big(x_2 \partial_3 f_1 -x_3\partial_2 f_1\big)\\ &+\big(x_1 \partial_3 f_2 -x_3\partial_1 f_2\big)-\big(x_1 \partial_2 f_3 -x_2\partial_1 f_3\big)=0,
\end{align*}
and
\begin{align*}
	&  -\big(\partial_{0} f_{0} +\textrm{Div}[{\bf{f}}]\big) {\bf{x}}   -\big[{\bf{x}},\partial_0 {\bf{f}}+\textrm{Rot}[{\bf {f}}]\big]   \\ & -((x_1 \partial_2 f_2 -x_2\partial_1 f_2)+(x_1 \partial_3 f_3 -x_3\partial_1 f_3)){\bf{e}}_{1}\\ &  +\big((x_1 \partial_2 f_1 -x_2\partial_1 f_1) -(x_2 \partial_3 f_3 -x_3\partial_2 f_3)\big){\bf{e}}_{2}\\ &  +\big((x_1 \partial_3 f_1 -x_3\partial_1 f_1) +(x_2 \partial_3 f_2 -x_3\partial_2 f_2)\big) {\bf{e}}_{3}=0.
\end{align*}
The previous  terms 	are the scalar and vector  parts of  $\nu[f]$, respectively.
\end{rem}
 
The following result shows a hyperholomorphic mapping defined on important quaternionic submodules of $\ker(G)$.

\begin{prop}\label{SRANDM} 
Let $\Omega\subset \mathbb H$ be an axially symmetric s-domain and $f\in \mathcal {SR}(\Omega)$. If
$${\bf x}\mathcal D[f]   + 3 f\in L^{2}(\Omega,\mathbb{H}),$$
then we have ${\bf x} f + \mathcal T\bigg[{ \bf x}\mathcal D[f] + 3 f\bigg]\in \mathcal M(\Omega)$.
  
\noindent
On the other hand, suppose that $\Omega\subset \mathbb H$ is an  symmetric s-domain which does not intersect the real axis. Let $f\in C^{1}(\Omega,\mathbb{H})$  such that ${ \bf x}\mathcal D[f] + 3 f\in L^{2}(\Omega,\mathbb{H})$. Then $f\in \mathcal {SR}(\Omega)$ if and only if  
$${\bf x} f + \mathcal T\bigg[{ \bf x}\mathcal D[f] + 3 f\bigg]\in \mathcal M(\Omega).$$ 
  \end{prop}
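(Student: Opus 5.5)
The whole argument rests on Proposition \ref{inverseThe}, which expresses $-2\nu[f]$ as $\mathcal D$ applied to the candidate function. Set
\begin{align*}
F := {\bf x} f + \mathcal T\big[{\bf x}\mathcal D[f] + 3 f\big].
\end{align*}
The hypothesis ${\bf x}\mathcal D[f]+3f\in L^2(\Omega,\mathbb H)$ is exactly what Proposition \ref{inverseThe} needs, so for every $f\in C^1(\Omega,\mathbb H)$ satisfying it we get
\begin{align*}
\mathcal D[F] = -2\nu[f] \quad \textrm{on } \Omega.
\end{align*}
Hence $F\in \textrm{Ker}(\mathcal D)$ if and only if $f\in\textrm{Ker}(\nu)$. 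By \eqref{gnu}, the latter is the same as $f\in \textrm{Ker}(G)\cap C^1(\Omega,\mathbb H)$.

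\emph{First assertion.} Let $\Omega$ be an axially symmetric s-domain and $f\in\mathcal{SR}(\Omega)$. Theorem \ref{teoHGM}(1) gives $f\in \textrm{Ker}(G)\cap C^1(\Omega,\mathbb H)$, so $\nu[f]=0$ by \eqref{gnu}. Equivalently, Corollary \ref{SRGYD}(1) gives ${\bf x}\mathcal D[f]+\mathcal D[{\bf x}f]+3f=0$ directly. In either case $\mathcal D[F]=0$ on $\Omega$.

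\emph{Second assertion.} Let $\Omega$ be a symmetric s-domain not meeting the real axis. Theorem \ref{teoHGM}(2) gives $\mathcal{SR}(\Omega)=\textrm{Ker}(G)\cap C^1(\Omega,\mathbb H)$. Combining this with the displayed identity and \eqref{gnu} yields the chain
\begin{align*}
f\in\mathcal{SR}(\Omega) \iff \nu[f]=0 \iff \mathcal D[F]=0 \iff F\in\mathcal M(\Omega),
\end{align*}
the last equivalence holding provided $F$ is $C^1$.

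\emph{Main obstacle: regularity of $F$.} To conclude $F\in\mathcal M(\Omega)$ we need $F\in C^1(\Omega,\mathbb H)$.
\begin{itemize}
\item The term ${\bf x}f$ is $C^1$ because $f$ is.
\item The term $\mathcal T[g]$, with $g={\bf x}\mathcal D[f]+3f$, needs a separate argument. In the first assertion $f$ is slice regular and hence real analytic, so $g$ is smooth and standard mapping properties of the Teodorescu transform give $\mathcal T[g]\in C^1$, even $C^\infty$. In the second assertion $g$ is only continuous and $L^2$, so I would follow the paper's convention and use \eqref{FueterInv} as the defining property, namely $\mathcal D\circ\mathcal T[g]=g$, understood wherever $\mathcal D[F]$ is taken in the same sense as in Proposition \ref{inverseThe}.
\end{itemize}
I expect this justification of differentiability to be the delicate point. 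The algebraic content is immediate from Propositions \ref{inverseThe} and \ref{identities} and Theorem \ref{teoHGM}.
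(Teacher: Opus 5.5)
Your argument is the paper's own: Proposition \ref{inverseThe} gives $\mathcal D[F]=-2\nu[f]$, and Theorem \ref{teoHGM} with \eqref{gnu} turns $\nu[f]=0$ into slice regularity (one direction in the first assertion, both directions in the second). The paper also passes over the $C^1$ regularity of $F$ that you flag, and it can be settled without appealing to a convention. In the converse direction, $F\in C^1$ is part of the hypothesis $F\in\mathcal M(\Omega)$, so $\mathcal T[{\bf x}\mathcal D[f]+3f]=F-{\bf x}f$ is $C^1$ and \eqref{FueterInv} holds classically. In the forward direction, $\mathcal D[F]=0$ holds in the distributional sense with $F$ locally integrable, so $\Delta_{\mathbb R^4}F=\overline{\mathcal D}\,\mathcal D[F]=0$ weakly, and Weyl's lemma makes $F$ smooth, hence $F\in\mathcal M(\Omega)$.
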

\begin{proof} {Use Theorem \ref{teoHGM},  Proposition \ref{inverseThe} and  note that $\nu[f]=0$ in $\Omega$, if} $\mathcal D \bigg[ {\bf x} f + \mathcal T\big[{ \bf x}\mathcal D[f] + 3 f\big] \bigg]=0$ in $\Omega$, i.e., $ { \bf x} f + \mathcal T[{ \bf x}\mathcal D[f]   + 3 f]\in \mathcal M(\Omega)$. 
\end{proof}

The following corollary  shows some properties of the Laplacian on $\mathcal {SR}(\Omega)$. 

\begin{cor} \label{identitiesLaplacetwo} If $\Omega\subset \mathbb H$ is an axially symmetric s$-$domain and $f\in \mathcal {SR}(\Omega)$, then 
   \begin{enumerate}
   	\item   $
   		\Delta_{\mathbb R^4}[ { \bf x} f] =  - \overline{\mathcal D}\bigg[{ \bf x}\mathcal D[f]\bigg]- 3 \overline{\mathcal D} f$ on $\Omega$. 
   	\item   $
   		\Delta_{\mathbb R^4}[ f]  =   \overline{\mathcal D}\bigg[\dfrac{{ \bf x}}{\|{ \bf x}\|^2} 
   		\sum_{k=0}^3[ \textrm{Grad} f_k,{\bf x} ]{\bf e}_k    \bigg]$ on $\{x\in  \Omega  \ \mid \ {\bf x}\neq 0\}$.
   	\item   $
   		\Delta_{\mathbb R^4}\big[\mathcal D [f]\big]  =   \Delta_{\mathbb{R}^{4}}\bigg[\dfrac{{ \bf x}}{||{ \bf x}||^{2}}\bigg( \sum_{k=0}^3[ \textrm{Grad} f_k , {\bf x} ]{\bf e}_k\bigg)\bigg]$   on $\{x\in  \Omega  \ \mid \ {\bf x}\neq 0\}$.
   \end{enumerate}
  \end{cor}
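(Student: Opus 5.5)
The plan is to specialize Proposition \ref{identitiesLaplaceone} to slice regular functions, using that they lie in the kernel of $\nu$. By Theorem \ref{teoHGM}(1), if $\Omega$ is an axially symmetric s-domain and $f\in\mathcal{SR}(\Omega)$, then $f\in \textrm{Ker}(G)\cap C^1(\Omega,\mathbb H)$. By \eqref{gnu} this gives $\nu[f]=0$ on $\Omega$. Slice regular functions are real analytic on slices, and through the representation $f=\alpha+I\beta$ also $C^\infty$ on $\Omega$. So $f\in C^2(\Omega,\mathbb H)$, and Proposition \ref{identitiesLaplaceone} applies. (If that smoothness needs justification, I would cite the representation formula for slice regular functions, which expresses $f$ through smooth $\alpha,\beta$.)

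For item 1, I substitute $\nu[f]=0$ into the first identity of Proposition \ref{identitiesLaplaceone}. The left-hand side $-2\overline{\mathcal D}[\nu[f]]$ vanishes. Hence $0=\overline{\mathcal D}[{\bf x}\mathcal D[f]]+\Delta_{\mathbb R^4}[{\bf x}f]+3\overline{\mathcal D}f$, and rearranging gives the claim. Equivalently, I could apply $\overline{\mathcal D}$ to the first identity of Corollary \ref{SRGYD} and use $\overline{\mathcal D}\circ\mathcal D=\Delta_{\mathbb R^4}$.

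For item 2, I put $\nu[f]=0$ into the second identity of Proposition \ref{identitiesLaplaceone} on $\{{\bf x}\neq 0\}$. This gives $\Delta_{\mathbb R^4}[f]=\overline{\mathcal D}\big[\frac{{\bf x}}{\|{\bf x}\|^2}\big(-\sum_k[{\bf x},\textrm{Grad} f_k]{\bf e}_k\big)\big]$. Antisymmetry of the vector product, $-[{\bf x},\textrm{Grad} f_k]=[\textrm{Grad} f_k,{\bf x}]$, turns this into the stated form.

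For item 3, I use Corollary \ref{SRGYD}(1): ${\bf x}\mathcal D[f]=\sum_k[{\bf x},\textrm{Grad} f_k]{\bf e}_k$. Left-multiplying by $-{\bf x}/\|{\bf x}\|^2$, and using $-{\bf x}{\bf x}=\|{\bf x}\|^2$ on $\{{\bf x}\neq0\}$, gives $\mathcal D[f]=\frac{{\bf x}}{\|{\bf x}\|^2}\sum_k[\textrm{Grad} f_k,{\bf x}]{\bf e}_k$. Applying $\Delta_{\mathbb R^4}$ to both sides yields item 3. This requires $f\in C^3$, which again follows from the smoothness of slice regular functions.

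The only delicate point is the regularity needed to apply second- and third-order operators. The algebra itself is immediate.
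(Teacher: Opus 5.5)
Your proof is correct, and for items 1 and 2 it is exactly the paper's argument: substitute $\nu[f]=0$ (from Theorem \ref{teoHGM} and \eqref{gnu}) into the two identities of Proposition \ref{identitiesLaplaceone}. For item 3 the paper applies $\mathcal D$ to item 2 and tacitly uses $\mathcal D\overline{\mathcal D}=\Delta_{\mathbb R^4}$ and $\mathcal D\Delta_{\mathbb R^4}=\Delta_{\mathbb R^4}\mathcal D$; you instead apply $\Delta_{\mathbb R^4}$ directly to the pointwise identity $\mathcal D[f]=\frac{{\bf x}}{\|{\bf x}\|^2}\sum_{k}[\textrm{Grad} f_k,{\bf x}]{\bf e}_k$, which is more direct and shows that items 2 and 3 are just $\overline{\mathcal D}$ and $\Delta_{\mathbb R^4}$ applied to one first-order identity, and your remark on the needed $C^2$/$C^3$ regularity addresses a point the paper leaves implicit.
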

\begin{proof} Recall that   $f\in \mathcal {SR}(\Omega)$ implies $\nu[f]=0$  on $\Omega$. 
 {\begin{enumerate}
		\item   The first equation in Corollary \ref{identitiesLaplaceone} gives us that   
		\begin{align*}
			0=&\overline{\mathcal D}[{ \bf x}\mathcal D[f]]+\Delta_{\mathbb R^4}[ { \bf x} f] + 3 \overline{\mathcal D} f.\end{align*}   
	 		\item  Using the second equation of Corollary \ref{identitiesLaplaceone} we obtain \begin{align*}
			\Delta_{\mathbb{R}^{4}} [f] =-\overline{\mathcal D}\bigg[\frac{{ \bf x}}{||{ \bf x}||^{2}}\bigg( \sum_{k=0}^3[ {\bf x}, \textrm{Grad} f_k  ]{\bf e}_k\bigg)\bigg].  
		\end{align*} 
	\item   Applying  Fueter operator at the previous identity we obtain  
\begin{align*}
{\mathcal D}[\Delta_{\mathbb{R}^{4}} [f] ]={\mathcal D}\bigg[\overline{\mathcal D}\bigg[\frac{{ \bf x}}{||{ \bf x}||^{2}}\bigg( \sum_{k=0}^3[ \textrm{Grad} f_k, {\bf x}  ]{\bf e}_k\bigg)\bigg]\bigg]. 
	\end{align*}
	\end{enumerate}}
\end{proof}

 {The following result  shows an important relationship between the harmonic function theory and the slice regular function theory.} 

\begin{prop} \label{identitiesLaplacetwo2} Let $\Omega\subset \mathbb H$ be   an axially symmetric s-domain and $f\in \mathcal {SR}(\Omega)$. If   
$$\dfrac{{ \bf x}}{||{ \bf x}||^{2}}  \sum_{k=0}^3[ \textrm{Grad} f_k , {\bf x}]{\bf e}_k\in L^{2}(\Omega,\mathbb{H}),$$ then we  
$$f - \mathcal T\bigg[    \dfrac{{ \bf x}}{||{ \bf x}||^{2}}  \sum_{k=0}^3[ \textrm{Grad} f_k , {\bf x} ]{\bf e}_k\bigg]$$
is a $\mathbb H$-valued harmonic function on $\{x\in  \Omega  \ \mid \ {\bf x}\neq 0\}$, i.e., its real components are harmonic functions on $\{x\in  \Omega  \ \mid \ {\bf x}\neq 0\}$.
  \end{prop}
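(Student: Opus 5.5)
Set $h:=\dfrac{{\bf x}}{\|{\bf x}\|^{2}}\sum_{k=0}^3[\textrm{Grad} f_k,{\bf x}]{\bf e}_k$ on $\Omega':=\{x\in\Omega \mid {\bf x}\neq 0\}$, and $u:=f-\mathcal T[h]$. The plan is to show that $u$ is Fueter hyperholomorphic on $\Omega'$. Harmonicity then follows from the factorization $\Delta_{\mathbb R^4}=\overline{\mathcal D}\circ\mathcal D$ used in Proposition \ref{identitiesLaplaceone}.

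First I compute $\mathcal D[f]$ for a slice regular $f$. Theorem \ref{teoHGM} and \eqref{gnu} give $\nu[f]=0$ on $\Omega$. The second identity of Proposition \ref{identities} then yields
$${\bf x}\mathcal D[f]=\sum_{k=0}^3[{\bf x},\textrm{Grad} f_k]{\bf e}_k ,$$
which is also the second formula of Corollary \ref{SRGYD}. On $\Omega'$ I multiply on the left by ${\bf x}^{-1}=-{\bf x}/\|{\bf x}\|^2$ and use $[{\bf x},\cdot]=-[\cdot,{\bf x}]$. This gives
$$\mathcal D[f]=\frac{{\bf x}}{\|{\bf x}\|^2}\sum_{k=0}^3[\textrm{Grad} f_k,{\bf x}]{\bf e}_k=h \quad\text{on } \Omega'.$$
This is the same identity that underlies item 2 of Corollary \ref{identitiesLaplacetwo}.

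Next, by hypothesis $h\in L^2$, so \eqref{FueterInv} gives $\mathcal D\circ\mathcal T[h]=h$. Hence $\mathcal D[u]=\mathcal D[f]-h=0$ on $\Omega'$. Applying $\overline{\mathcal D}$ gives $\Delta_{\mathbb R^4}[u]=\overline{\mathcal D}\mathcal D[u]=0$ componentwise. The operators $\Delta_{\mathbb R^4}$ and $\overline{\mathcal D}$ have real coefficients, so this is exactly the harmonicity of each real component of $u$.

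The main obstacle is regularity. To apply $\overline{\mathcal D}$ to $\mathcal D[u]$ classically, $u$ must be $C^2$, but $\mathcal T[h]$ with $h$ only in $L^2$ is a priori merely weakly differentiable. I would handle this as follows. The equation $\mathcal D[\mathcal T[h]]=h$ holds in the distributional sense, so $\mathcal D[u]=0$ holds weakly on $\Omega'$. Then $\Delta_{\mathbb R^4}u=\overline{\mathcal D}\mathcal D u=0$ in the sense of distributions. Weyl's lemma upgrades this to a smooth harmonic $u$. Alternatively, one may note that $h$ is real-analytic on $\Omega'$, since $f$ is real-analytic, so $\mathcal T[h]$ is $C^\infty$ there by interior elliptic regularity, and the classical computation is legitimate.
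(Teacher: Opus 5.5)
Your argument is correct (granting \eqref{FueterInv}, as the paper does) and is essentially the paper's proof. The paper first applies $\overline{\mathcal D}$ to $\mathcal D[f]=h$ on $\{x\in\Omega \mid {\bf x}\neq 0\}$ (item 2 of Corollary \ref{identitiesLaplacetwo}) and only then substitutes $h=\mathcal D\circ\mathcal T[h]$; you cancel $h$ at the level of $\mathcal D$, which gives the slightly stronger conclusion $\mathcal D[u]=0$ there, and you supply the regularity justification that the paper omits.

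One caveat affects both proofs equally. With $K$ and $\mathcal T$ literally as defined, $K(y-x)=-K(x-y)$ gives $\mathcal D\circ\mathcal T=-I$ rather than \eqref{FueterInv}; for example, $\mathcal T[1](x)=-\bar x/4$ on $\mathbb B^4(0,1)$. So the statement must be read with $\mathcal T$ replaced by $-\mathcal T$, the genuine right inverse of $\mathcal D$. For the literal $\mathcal T$ it already fails for $f(x)=x^2$, where $h=-4x_0$ and $\Delta_{\mathbb R^4}[f-\mathcal T[h]]=-8$.
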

  \begin{proof}Apply the identity  \eqref{FueterInv} on the right side of  the second fact  of Corollary \ref{identitiesLaplacetwo} to obtain that  
  \begin{align*}   		\Delta_{\mathbb R^4}[ f]  = &  \overline{\mathcal D}\circ {\mathcal D}\bigg[ {\mathcal T}\big[\dfrac{{ \bf x}}{\|{ \bf x}\|^2} 
   		\sum_{k=0}^3[ \textrm{Grad} f_k,{\bf x} ]{\bf e}_k    \big]\bigg]\\
   		=&  		\Delta_{\mathbb R^4} \bigg[ {\mathcal T}\big[\dfrac{{ \bf x}}{\|{ \bf x}\|^2} 
   		\sum_{k=0}^3[ \textrm{Grad} f_k,{\bf x} ]{\bf e}_k    \big]\bigg],
   		\end{align*}  on $\{x\in  \Omega  \ \mid \ {\bf x}\neq 0\}$.
    \end{proof}
 
We shall see  some  global integral equations of  slice regular functions. 

\begin{cor} If $\Omega\subset \mathbb H$ is an axially symmetric s$-$domain and $f\in \mathcal {SR}(\Omega)$. If ${ \bf x}\mathcal D[f]   + 3 f\in L^{2}(\Omega,\mathbb{H})$, then  
\begin{align*}  &  \int_{\partial \Omega}K (y-x)\sigma_{y}   
 \bigg({ \bf y} f + \mathcal T\bigg[{ \bf q}\mathcal D[f](q)   + 3 f(q)\bigg] (y) \bigg) -
  \mathcal T \big[\ { \bf q}\mathcal D[f](q)   + 3 f(q) \ \big](x) 
  \\
  =  &   { \bf x} f (x) ,
\end{align*} 
 for all $x\in \Omega$, and                
\begin{align*}  &  \int_{\partial \Omega}K (y-x)\sigma_{y}   
 \bigg({ \bf y} f + \mathcal T\big[{ \bf q}\mathcal D[f](q)   + 3 f(q)\big] (y) \bigg) =0, \quad \forall 
  x\in \mathbb H\setminus\overline{\Omega}.
 \end{align*} 
  \end{cor}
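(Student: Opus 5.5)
This corollary is the specialization of Proposition \ref{formulatipoBorel} to slice regular functions, in the same way that Corollary \ref{formulatipeCauchy} specializes it to $\textrm{Ker}(\nu)$. The plan has two steps. First, show that $\nu[f]=0$ on $\Omega$ whenever $f\in\mathcal{SR}(\Omega)$. Second, substitute this into the two Borel--Pompeiu type identities of Proposition \ref{formulatipoBorel}, so that every volume integral containing $\nu[f]$ disappears.

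For the first step, I use Theorem \ref{teoHGM}(1): on an axially symmetric s-domain, $\mathcal{SR}(\Omega)\subset \textrm{Ker}(G)\cap C^1(\Omega,\mathbb H)$. By \eqref{gnu}, $\textrm{Ker}(G)\cap C^1(\Omega,\mathbb H)=\textrm{Ker}(\nu)\cap C^1(\Omega,\mathbb H)$, so $\nu[f]\equiv 0$ on $\Omega$. Equivalently, by the first identity of Corollary \ref{SRGYD}, ${\bf x}\mathcal D[f]+\mathcal D[{\bf x}f]+3f=0$, which is $-2\nu[f]=0$ by Proposition \ref{identities}.

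For the second step, the hypothesis ${\bf x}\mathcal D[f]+3f\in L^2(\Omega,\mathbb H)$ is exactly what Proposition \ref{formulatipoBorel} requires. Applying it for $x\in\Omega$ and dropping the term $2\int_\Omega K(y-x)\nu[f](y)\,dy=0$ gives the first displayed identity. Applying it for $x\in\mathbb H\setminus\overline\Omega$ gives the boundary integral as $-2\int_\Omega K(y-x)\nu[f](y)\,dy=0$, which is the second identity.

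The main obstacle is regularity at the boundary. Proposition \ref{formulatipoBorel} assumes $f\in C^1(\Omega,\mathbb H)\cap C(\overline\Omega,\mathbb H)$ and that $\partial\Omega$ is a smooth 3-dimensional surface, and the corollary states neither. Slice regularity gives $C^1$ (in fact real analyticity) in the interior only. I would therefore read the statement as implicitly carrying the standing assumptions of Proposition \ref{formulatipoBorel}, namely that $\partial\Omega$ is smooth and $f$ extends continuously to $\overline\Omega$, as is needed for the boundary integrals to make sense. If one prefers not to assume this, the fallback is to prove the identities on smooth subdomains $\Omega'\Subset\Omega$, where all hypotheses hold automatically. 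With that convention the proof reduces to the two substitutions above.
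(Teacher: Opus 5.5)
Your proof is correct and follows the paper's argument: the paper invokes Corollary \ref{formulatipeCauchy} (which is Proposition \ref{formulatipoBorel} with $\nu[f]=0$) together with Proposition \ref{SRANDM}, and obtains $\nu[f]=0$ for $f\in\mathcal{SR}(\Omega)$ from Theorem \ref{teoHGM} and \eqref{gnu}, as you do. You are right that the smoothness of $\partial\Omega$ and $f\in C(\overline{\Omega},\mathbb H)$ have to be read as implicit hypotheses; the paper's proof uses them in the same way without stating them in the corollary.
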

\begin{proof}
  Corollary \ref{formulatipeCauchy} and  Proposition \ref{SRANDM} show that 
$${\bf{x}} f + \mathcal{T}\bigg[{ \bf{q}}\mathcal{D}[f] (q)  + 3 f (q)\bigg]\in \mathcal{M}(\Omega),$$  
 {\begin{align*}   
&	\int_{\partial\Omega}{K(y-x)\sigma_{y}   
	\bigg({\bf{y}}f+\mathcal{T}\big[{\bf{q}}\mathcal{D}[f](q)+3f(q)\big](y)\bigg)-\mathcal{T}\big[{\bf{q}}\mathcal{D}[f](q)+3f(q)\big](x)}\\
	= & {\bf{x}}f(x),
\end{align*} 
for all $x\in \Omega$, and}
\begin{align*}    
	\int_{\partial \Omega}K (y-x)\sigma_{y}   
	\bigg({ \bf{y}}f + \mathcal{T}\big[{ \bf{q}}\mathcal{D}[f](q)+3f(q)\big](y)\bigg) =&0,
	\quad \forall x\in \mathbb{H}\setminus\overline{\Omega}.
\end{align*} 
\end{proof}

\begin{rem}
Recall that, if $\Omega\subset \mathbb H$ is a symmetric s-domain that does not intersect the real axis. Let $f\in C^{1}(\Omega,\mathbb{H})$ and ${ \bf x}\mathcal D[f]   + 3 f\in L^{2}(\Omega,\mathbb{H})$. Then $f\in \mathcal {SR}(\Omega)$ if and  only if  
$${\bf x} f + \mathcal T\bigg[{ \bf x}\mathcal D[f] + 3 f\bigg]\in \mathcal M(\Omega).$$ 
As in the hyperholomorphic function theory  Cauchy-type formula and  Cauchy-type theorem are innate or distinctive characteristics, then we obtain that $f\in \mathcal {SR}(\Omega)$ if and only if
\begin{align*}  &  \int_{\partial \Omega}K (y-x)\sigma_{y}   
 \bigg({ \bf y} f + \mathcal T\bigg[{ \bf q}\mathcal D[f] (q)  + 3 f(q)\bigg] (y) \bigg) -
  \mathcal T \big[\ { \bf q}\mathcal D[f] (q)  + 3 f(q) \ \big](x) \\
  = &    { \bf x} f (x) ,
\end{align*} 
 for all $x\in \Omega$, and                
\begin{align*}  &  \int_{\partial \Omega}K (y-x)\sigma_{y}   
 \bigg({ \bf y} f + \mathcal T\big[{ \bf q}\mathcal D[f] (q)  + 3 f(q)\big] (y) \bigg) =0, \quad \forall 
  x\in \mathbb H\setminus\overline{\Omega},
 \end{align*} 
 respectively.
\end{rem}

\subsection{On the hyperholomorphic function theory}\label{hftheor}

This subsection  presents a study of the  hyperholomorphic function theory as  consequence of the statements presented in the previous two sections.

The Fueter operator applied to a power of the identity function gives:
 \begin{align*}
\mathcal D[x^n]= & \sum_{k=0}^3 e_k\partial_k (x^n)=  \sum_{k=0}^3 e_k ( e_k x^{n-1} + x e_k x^{n-2}+ x^2 e_k    x^{n-3}+ \cdots + x^{n-1} e_k) \\    
= & -2\bigg[   x^{n-1} + \overline{x}  x^{n-2}+ (\bar x)^2 x^{n-3}+ \cdots +  (\bar x)^{n-1} \bigg] = -2\sum_{k=1}^{n} (\bar{x})^{k-1}x^{n-k},  
\end{align*}
for $n\in \mathbb N$, where we have used \eqref{sumconj} in the computation shown in the second row, see \cite{B}. 

\begin{prop}\label{serieg} 
Let $f\in \mathcal {SR}(\mathbb B^4(0,1))$ given by $f(x)= \sum_{n=0}^\infty  x^n a_n$   on $ \mathbb B^4(0,1)$ such that  
 $$\sum_{n=1}^\infty \left\{   \mathcal T \left[  -2  { \bf x}      \sum_{k=1}^{n} (\bar{x})^{k-1} x^{n-k} + 3 x^n \ \right]\right\} a_n $$
is uniformly convergent on compacts subsets of $\mathbb B^4(0,1)$,  then  the function series $\sum_{n
=0}^\infty \xi_n(x)  a_n $   belongs to $\mathcal M ( \mathbb B^4(0,1))$, 
where  the functions $\mathcal \xi_n$ are given by Example \ref{funcionesO}.
\end{prop}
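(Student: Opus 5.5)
The proof goes term by term: first show that each $\xi_n$ is hyperholomorphic, then pass to the limit of the series.

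\textbf{Each $\xi_n$ is hyperholomorphic.} The power $x\mapsto x^n$ is slice regular on $\mathbb B^4(0,1)$, so by Theorem \ref{teoHGM} and \eqref{gnu} we have $\nu[x^n]=0$. By Proposition \ref{inverseThe},
$$\mathcal D\Big[{\bf x}x^n+\mathcal T\big[{\bf x}\mathcal D[x^n]+3x^n\big]\Big]=-2\nu[x^n]=0 .$$
The earlier computation gives $\mathcal D[x^n]=-2\sum_{k=1}^n(\bar x)^{k-1}x^{n-k}$. Hence
$${\bf x}\mathcal D[x^n]+3x^n=-2{\bf x}\sum_{k=1}^n(\bar x)^{k-1}x^{n-k}+3x^n ,$$
which is exactly the integrand defining $\xi_n$ in Example \ref{funcionesO}. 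For $n=0$ the integrand reduces to $3$, giving $\xi_0$.

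These integrands are polynomials, hence bounded on the ball and in $L^2$, so Proposition \ref{inverseThe} applies. The $\mathcal T$-term is $C^1$: $\mathcal T$ applied to smooth bounded data is smooth in the interior. Therefore $\xi_n\in\mathcal M(\mathbb B^4(0,1))$. Since $\mathcal M$ is a right $\mathbb H$-module, $\xi_n a_n\in\mathcal M(\mathbb B^4(0,1))$ as well. It follows that every partial sum $S_m=\sum_{n=0}^m\xi_n a_n$ is hyperholomorphic.

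\textbf{Locally uniform convergence.} Split $S_m$ into two pieces:
$$S_m=\sum_{n=0}^m {\bf x}x^na_n+\sum_{n=0}^m\mathcal T\big[\cdots\big]a_n .$$
The first piece equals ${\bf x}$ times a partial sum of $f$. It converges uniformly on compact subsets of the ball to ${\bf x}f$, because the power series of $f$ does (uniform convergence on compacts inherited from the slices, as in the proof of Proposition \ref{GDkerConstant}). The second piece converges uniformly on compacts by hypothesis; the $n=0$ term $3\mathcal T[1]a_0$ is a single fixed function and does not affect convergence. Hence $S_m\to S:=\sum_n\xi_na_n$ locally uniformly.

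\textbf{The limit is hyperholomorphic.} The components of each $S_m$ are harmonic, since $\Delta_{\mathbb R^4}=\overline{\mathcal D}\mathcal D$ and $\mathcal D S_m=0$. Locally uniform limits of harmonic functions are harmonic, with all derivatives converging locally uniformly. So $S\in C^\infty$ and $\mathcal D[S]=\lim_m\mathcal D[S_m]=0$, that is, $S\in\mathcal M(\mathbb B^4(0,1))$.

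An alternative route to the same conclusion is the Cauchy formula. Apply \eqref{BorelHyp} with $\mathcal D S_m=0$ on a smaller ball $\mathbb B^4(0,r)$. Letting $m\to\infty$ under the boundary integral, $S$ is represented by its Cauchy integral, which is hyperholomorphic inside the ball.

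The main obstacle is this limit step: justifying that uniform convergence on compacts alone yields differentiability of the limit and $\mathcal D[S]=0$. I would handle it through harmonicity (or the Cauchy integral representation) as above. Everything else is bookkeeping: matching the integrand with $\xi_n$ and checking the $L^2$ hypothesis.
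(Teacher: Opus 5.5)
Your proof is correct, but it is organized differently from the paper's.

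\textbf{The paper's route.} The paper never uses that the individual $\xi_n$ are hyperholomorphic. It passes to the limit only in the $\mathcal T$-terms and calls that limit $p$. It then asserts $\mathcal D p={\bf x}\mathcal D[f]+3f$ and deduces $p=\mathcal T[{\bf x}\mathcal D[f]+3f]+h$ with $h$ hyperholomorphic. Finally it applies Proposition~\ref{SRANDM} to $f$ itself to conclude that the sum ${\bf x}f+p$ lies in $\mathcal M(\mathbb B^4(0,1))$.

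\textbf{Your route.} You pair ${\bf x}x^n$ with its own $\mathcal T$-term, so each partial sum is already hyperholomorphic by Proposition~\ref{inverseThe} applied to monomials, where the $L^2$ condition is trivial. You then close with the Weierstrass-type fact that $\mathcal M$ is closed under locally uniform limits.

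\textbf{What each buys.} Your argument needs only the stated hypothesis. The paper's argument needs ${\bf x}\mathcal D[f]+3f\in L^2(\mathbb B^4(0,1))$ both to make sense of $\mathcal T[{\bf x}\mathcal D[f]+3f]$ and to invoke Proposition~\ref{SRANDM}, and this is not among the assumptions. It also commutes $\mathcal D$ with a locally uniform limit of functions that are not hyperholomorphic. That step is valid only in a distributional sense and is left unjustified there. Your grouping turns that interchange into the standard harmonic (or Cauchy-integral) limit theorem.

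In exchange, when its extra integrability holds, the paper's route identifies the sum explicitly as ${\bf x}f+\mathcal T[{\bf x}\mathcal D[f]+3f]+h$. This ties the new submodule to the map of Proposition~\ref{SRANDM}, whereas your argument only certifies that the limit is hyperholomorphic.
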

\begin{proof}
Let $f(x)=\sum_{k=0}^{\infty}x^n a_n$ for all $x\in \mathcal {SR}(\mathbb B^4(0,1))$, with $a_n \in \mathbb{H}$. The hypothesis and the identity $\mathcal D\big[x^m \big] =-2\sum_{k=1}^{m}(\bar{x})^{k-1}  x^{m-k}$, for $m\in \mathbb N$, are used to see that 
$$\left(\displaystyle \mathcal T\bigg[{\bf x}\mathcal D\bigg[\sum_{k=0}^{n} x^k a_k \bigg] + 3\sum_{k=0}^{n}x^k a_k  \bigg]\right)_{n\geq 0}$$      
is uniformly convergent on compacts subsets of $\mathbb B^4(0,1)$  to a function denoted by $p$. 

By \eqref{FueterInv}, we obtain that   
$$\left(\displaystyle {\bf x}\mathcal D\bigg[\sum_{k=0}^{n} x^k a_k \bigg] + 3\sum_{k=0}^{n}x^k a_k \right)_{n\geq 0},$$
converges to $\mathcal D p$. Then
${\bf x}\mathcal D[f]+ 3f = \mathcal D p$ and $\mathcal D\bigg[\mathcal T\big[ {\bf x}\mathcal D[f]+ 3f \big] \bigg]= \mathcal D p$, which shows that there exists a hyperholomorphic function $h$ on 
$\mathbb B^4(0,1)$ such that $p=  \mathcal T\big[ {\bf x}\mathcal D[f]+ 3f \big] + h$.
 
Therefore,  
$$\left\{ { \bf x}  +  3\mathcal T \left[ 1\right] \right\}  a_0  + \sum_{n=1}^\infty \left\{  { \bf x} x^n - 2 \sum_{k=1}^{n}\mathcal T \left[{\bf x}\ (\bar{x})^{k-1}  x^{n-k}\right] + 3 \mathcal T \left[ x^n   \right]\right\} a_n$$
 converges to 
 ${\bf x} f(x) + p(x)= { \bf x} f(x)  + \mathcal T\big[ {\bf q}\mathcal D[f](q)+ 3f (q)\big] (x)+ h (x)$. Recall that  Proposition \ref{SRANDM}   shows   that 
$ {\bf x} f + \mathcal T\big[{ \bf q}\mathcal D[f] (q)  + 3 f (q)\big]$ is a hyperholomorphic function too.
\end{proof}

\begin{rem}\label{remsubfam} The previous corollary presents a subfamily of functions  $\{\xi_n \ \mid n \geq 0\}$ of $\mathcal{M}\bigg(\mathbb{B}^{4}(0,1)\bigg)$ which  {allows us} to  describe a new quaternionic submodule of $\mathcal{M}\bigg(\mathbb{B}^{4}(0,1)\bigg)$ formed  by function  series  induced by regular slice functions.  This submodule complements the well-known development of all elements of $\mathcal{M}\bigg(\mathbb{B}^{4}(0,1)\bigg)$, given in terms of families of symmetric polynomials in Fueter's variables, to be found as the solution to a  Gleasson problem, see \cite{A1}. 

On the ohter hand, if  $f, g\in \mathcal {SR}(\mathbb B^4(0,1))$  given by 
$f(x)= \sum_{n=0}^\infty  x^n a_n$   and  $g(x)= \sum_{n=0}^\infty  x^n b_n$ 
 satisfy Proposition \ref{serieg},  then  the function series $\sum_{n
=0}^\infty \xi_n(x)  a_n $  and 
$\sum_{n
=0}^\infty \xi_n(x)  b_n $, for all $x\in \mathbb B^4(0,1)$,   belong  to $\mathcal M ( \mathbb B^4(0,1))$. In addition, the star-product  $(f\ast g)$ given in Definition \ref{star-product}, as follows: 
$$\displaystyle f\ast g (x)= \sum_{n=0}^\infty  x^n (\sum_{k=0}^n a_k b_{n-k}), \quad \forall x\in \mathbb B^4(0,1), $$
satisfies the hypothesis of Proposition \ref{serieg} and generates the hiperholomorphic function: 
$$ \sum_{n=0}^\infty  \xi_n(x) (\sum_{k=0}^n a_k b_{n-k}), \quad \forall x\in \mathbb B^4(0,1),$$
i.e., the  star-product  of slice regular functions induces   a $\odot$-product  on this quaternionic submodule:
 $$
\bigg(  \sum_{n
=0}^\infty \xi_n(x)  a_n \bigg) \odot \bigg( \sum_{n
=0}^\infty \xi_n(x)  b_n  \bigg) =  \sum_{n=0}^\infty  \xi_n(x) (\sum_{k=0}^n a_k b_{n-k}), \quad \forall x\in \mathbb B^4(0,1).$$
\end{rem}

\begin{ex} \ {}
\begin{enumerate}
\item The functions  $$\displaystyle \text{exp}(x) =\sum_{n=0}^{\infty}\frac{x^{n}}{n!} , \ \ 
 \displaystyle \text{sin}(x) =\sum_{n=0}^{\infty}(-1)^n\frac{x^{2n+1}}{(2n+1)!}  , \ \ 
 \displaystyle \text{cos}(x) =\sum_{n=0}^{\infty}(-1)^n\frac{x^{2n}}{(2n)!}, $$
 for all $x\in \mathbb H$, allow us to consider their hyperholomorphic  versions: 
   $$\displaystyle  \sum_{n
=0}^\infty \xi_n(x)  \frac{1}{n!} , \ \  \displaystyle  \sum_{n
=0}^\infty (-1)^n \xi_{2n+1}(x)  \frac{1}{(2n+1)!} , \ \  \displaystyle  \sum_{n
=0}^\infty (-1)^n \xi_{2n}(x)  \frac{1}{(2n)!} ,$$
 for all $x\in \mathbb B^4(0,1)$,  	respectively.
\item If $f$ is a quaternionic polynomial restricted   to $\mathbb B^4(0,1)$ given by $f(x)=\sum_{n=0}^{k}x^{n}a_{n} $, for all $x\in \mathbb B^4(0,1)$, then $f(x)=\sum_{n=0}^{k}\xi_n(x) a_{n} $, for all $x\in \mathbb B^4(0,1)$, belongs to $\mathcal M ( \mathbb B^4(0,1))$. 
\end{enumerate}
\end{ex}

We conclude this subsection with some properties that the $\nu$ operator offers to hyperholomorphic functions.

\begin{cor}\label{identitiesH}Let $\Omega \subset \mathbb H$ be a domain and $f\in \mathcal M (\Omega)$. Then
		\begin{itemize}
			\item [1.] $				-2  \nu [f]=  \mathcal D[ { \bf x} f] + 3 f$,		
				\item  [2.] $			\nu [f]=  \sum_{k=0}^3[ {\bf x}, \textrm{Grad} f_k  ]{\bf e}_k$.
		\end{itemize} 
\end{cor}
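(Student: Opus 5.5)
This corollary is obtained by specializing Proposition \ref{identities} to the case $\mathcal D[f]=0$. No new computation is needed. Take $f\in\mathcal M(\Omega)$. By definition of $\mathcal M(\Omega)=\textrm{Ker}(\mathcal D)\cap C^1(\Omega,\mathbb H)$, we have $f\in C^1(\Omega,\mathbb H)$, so Proposition \ref{identities} applies to $f$ on the domain $\Omega$. We also have $\mathcal D[f]=0$ identically on $\Omega$, hence ${\bf x}\mathcal D[f]=0$ pointwise on $\Omega$.

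For the first item, I will start from the first identity of Proposition \ref{identities},
\begin{align*}
-2\nu[f]={\bf x}\mathcal D[f]+\mathcal D[{\bf x}f]+3f .
\end{align*}
Deleting the vanishing term ${\bf x}\mathcal D[f]$ gives $-2\nu[f]=\mathcal D[{\bf x}f]+3f$ on $\Omega$. The only point to check is that $\mathcal D[{\bf x}f]$ makes sense. This holds because ${\bf x}f\in C^1(\Omega,\mathbb H)$, being the product of the smooth function $x\mapsto{\bf x}$ with $f$.

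For the second item, I will use the second identity of Proposition \ref{identities},
\begin{align*}
\nu[f]=-{\bf x}\mathcal D[f]+\sum_{k=0}^3[{\bf x},\textrm{Grad} f_k]{\bf e}_k ,
\end{align*}
and again drop the vanishing term $-{\bf x}\mathcal D[f]$, which leaves $\nu[f]=\sum_{k=0}^3[{\bf x},\textrm{Grad} f_k]{\bf e}_k$.

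There is no genuine obstacle here. The one thing to note is that Proposition \ref{identities} was stated for an arbitrary domain and involves no division by $\|{\bf x}\|^2$, so no restriction to $\{{\bf x}\neq0\}$ is required. This is in contrast to the original derivation through Proposition \ref{FueterYnu}.
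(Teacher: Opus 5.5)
Your proposal is correct and is the paper's own argument: the paper's proof just says the result follows from Proposition \ref{identities}, and setting $\mathcal D[f]=0$ for $f\in\mathcal M(\Omega)$ removes the ${\bf x}\mathcal D[f]$ term from both identities. Your side remark is also right: both identities of Proposition \ref{identities} hold on all of $\Omega$, which can be checked directly from ${\bf e}_k{\bf x}+{\bf x}{\bf e}_k=-2x_k$ for $k=1,2,3$, so no restriction to ${\bf x}\neq 0$ is needed.
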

\begin{proof} Follows from Proposition \ref{identities}.
\end{proof}

\begin{cor}\label{formulatipoBorelH}Let $\Omega\subset \mathbb H$ be a domain such that 
 $\partial \Omega$ is a 3-dimensional smooth surface and $f\in \mathcal M( \Omega) \cap C(\overline{\Omega},\mathbb H)$. If ${ \bf x}\mathcal D[f]   + 3 f\in L^{2}(\Omega,\mathbb{H})$,   then   
\begin{align*}  &  \int_{\partial \Omega}K (y-x)\sigma_{y}   \bigg( { \bf y} f (y) + 3\mathcal T [    f ]  (y) \bigg)     +2 
\int_{\Omega}  K  (y-x)   \nu [f] (y)  dy  \\
& - 3 \mathcal T\big[ f\big] (x) =  { \bf x} f(x) ,  
\end{align*} 
if  $x\in \Omega$. On the other hand, 
\begin{align*}  &  \int_{\partial \Omega} K (y-x)\sigma_{y}   \bigg( { \bf y} f (y) +3 \mathcal T[ f]  (y) \bigg)    =- 2 
\int_{\Omega}  K  (y-x)   \nu [f] (y)  dy   ,
\end{align*} 
if  $ x\in \mathbb H\setminus\overline{\Omega}$. In addition,  
\begin{align*} \int_{\partial \Omega}  \sigma _x \bigg( 
{ \bf x} f (x) +3 \mathcal T\big[ f\big]  (x)
\bigg)= 
 &  -2 \int_{\Omega } 
  \nu [f] dx.
\end{align*}
 \end{cor}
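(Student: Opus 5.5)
This statement is the specialization of Proposition \ref{formulatipoBorel} to $f\in\mathcal M(\Omega)$. The plan is to check that the hypotheses of that proposition hold, and then to simplify the auxiliary function ${\bf x}f+\mathcal T[{\bf q}\mathcal D[f](q)+3f(q)]$ using $\mathcal D[f]=0$.

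First, $f\in \mathcal M(\Omega)\cap C(\overline{\Omega},\mathbb H)$ means $f\in C^1(\Omega,\mathbb H)\cap C(\overline{\Omega},\mathbb H)$, so the regularity assumptions of Proposition \ref{formulatipoBorel} are satisfied. The integrability assumption ${\bf x}\mathcal D[f]+3f\in L^2(\Omega,\mathbb H)$ is assumed verbatim. Since $\mathcal D[f]=0$ on $\Omega$, this assumption reduces to $3f\in L^2(\Omega,\mathbb H)$, so $\mathcal T[f]$ is well defined.

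Next, by linearity of $\mathcal T$, for every point (inside $\Omega$ or on $\partial\Omega$, as a function of the variable) we have
\begin{align*}
\mathcal T\big[{\bf q}\mathcal D[f](q)+3f(q)\big]=\mathcal T[3f]=3\mathcal T[f].
\end{align*}
Substituting this into the three identities of Proposition \ref{formulatipoBorel} turns them into the three displayed identities of the statement: the Borel--Pompeiu type formula for $x\in\Omega$, the one for $x\in\mathbb H\setminus\overline{\Omega}$, and the Stokes type formula. No other term changes, since $\nu[f]$ remains as it is. By Corollary \ref{identitiesH} one could further write $-2\nu[f]=\mathcal D[{\bf x}f]+3f$, but this is not needed here.

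The only point that needs care is the boundary terms. These require $\mathcal T[f]$ to be defined and continuous up to $\partial\Omega$, so that $\mathcal T[f](y)$ makes sense for $y\in\partial\Omega$. The same issue is already implicit in Proposition \ref{formulatipoBorel}. It is settled by the standard fact that $\mathcal T$ maps $L^2$ (or bounded continuous) functions into continuous functions on $\mathbb H$, because the kernel $K$ is weakly singular. The proof is therefore a direct invocation of Proposition \ref{formulatipoBorel}.
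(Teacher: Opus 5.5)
Your proposal is correct and follows the paper's argument: the paper's proof is the one-line remark that the corollary follows from Proposition~\ref{formulatipoBorel}, that is, by substituting $\mathcal D[f]=0$ so that $\mathcal T[{\bf q}\mathcal D[f](q)+3f(q)]=3\mathcal T[f]$. One side remark of yours is inaccurate, though harmless here: in $\mathbb R^4$ the kernel $|K(y-x)|\sim|y-x|^{-3}$ is not in $L^2_{loc}$, so $\mathcal T$ does not map $L^2$ into continuous functions (continuity is guaranteed for $L^p$ with $p>4$); in your situation the density is $3f$ with $f\in C(\overline{\Omega},\mathbb H)$, which is bounded on the (implicitly bounded) domain, so the bounded-continuous case you also cite is the one that applies.
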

\begin{proof}Follows from Proposition \ref{formulatipoBorel}. 
\end{proof}

 \begin{cor}\label{identitiesLaplaceoneH} Let $\Omega\subset \mathbb H$ be a domain.
 	 If $f\in \mathcal M(\Omega)$,  then
\begin{itemize}
	\item  [1.] $
	\Delta_{\mathbb R^4}[ { \bf x} f] =	-2 \overline{\mathcal D}\bigg[ \nu [f]\bigg]- 3 \overline{\mathcal D} f$, 
	\item [2.] The function 
	$\dfrac{{ \bf x}}{||{ \bf x}||^{2}}\bigg(\nu [f] -\sum_{k=0}^3\big[ {\bf x}, \textrm{Grad} f_k  \big]{\bf e}_k\bigg)$ is anti-hyperholomorphic on   $\{x\in  \Omega  \ \mid \ {\bf x}\neq 0\}$, i.e.,
$$
		 \overline{\mathcal D}\bigg[\dfrac{{ \bf x}}{||{ \bf x}||^{2}}\bigg(\nu [f] -\sum_{k=0}^3[ {\bf x}, \textrm{Grad} f_k  ]{\bf e}_k\bigg)\bigg]=0,$$
				on $\{x\in  \Omega  \ \mid \ {\bf x}\neq 0\}$.
\end{itemize} 
\end{cor}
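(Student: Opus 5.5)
The plan is to specialize Proposition \ref{identitiesLaplaceone} to hyperholomorphic functions, in the same way that Corollary \ref{identitiesH} specializes Proposition \ref{identities}. Throughout, take $f\in\mathcal M(\Omega)$ and assume $f\in C^2(\Omega,\mathbb H)$, so that all second derivatives below make sense. This regularity is automatic: hyperholomorphic functions are harmonic, hence real analytic, because $\Delta_{\mathbb R^4}=\overline{\mathcal D}\circ\mathcal D$ and $\mathcal D[f]=0$.

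For item 1, start from the first identity of Proposition \ref{identitiesLaplaceone}:
\begin{align*}
-2\overline{\mathcal D}\big[\nu[f]\big]=\overline{\mathcal D}\big[{\bf x}\mathcal D[f]\big]+\Delta_{\mathbb R^4}[{\bf x}f]+3\overline{\mathcal D}f .
\end{align*}
Since $\mathcal D[f]=0$ on $\Omega$, the function ${\bf x}\mathcal D[f]$ vanishes identically, so its image under $\overline{\mathcal D}$ is zero. Rearranging the remaining terms gives $\Delta_{\mathbb R^4}[{\bf x}f]=-2\overline{\mathcal D}[\nu[f]]-3\overline{\mathcal D}f$.

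An equivalent route is to use Corollary \ref{identitiesH}(1), namely $\mathcal D[{\bf x}f]=-2\nu[f]-3f$. Applying $\overline{\mathcal D}$ to both sides and using $\overline{\mathcal D}\circ\mathcal D=\Delta_{\mathbb R^4}$ yields the same identity.

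For item 2, use the second identity of Proposition \ref{identitiesLaplaceone}, valid on $\{x\in\Omega \mid {\bf x}\neq0\}$:
\begin{align*}
\Delta_{\mathbb R^4}[f]=\overline{\mathcal D}\Big[\tfrac{{\bf x}}{\|{\bf x}\|^2}\Big(\nu[f]-\textstyle\sum_{k=0}^3[{\bf x},\textrm{Grad} f_k]{\bf e}_k\Big)\Big].
\end{align*}
For $f\in\mathcal M(\Omega)$ the left-hand side is $\overline{\mathcal D}[\mathcal D[f]]=0$. Hence the displayed bracket is annihilated by $\overline{\mathcal D}$, which is exactly the anti-hyperholomorphy claim.

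One can go further. Corollary \ref{identitiesH}(2) gives $\nu[f]=\sum_k[{\bf x},\textrm{Grad} f_k]{\bf e}_k$, so the function inside the bracket is identically zero, and the statement holds trivially. This gives a second, independent check.

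I expect no real obstacle; the proof is a direct specialization. The only point requiring care is regularity, that is, justifying $f\in C^2$. This is handled by the harmonicity (and hence smoothness) of hyperholomorphic functions noted above.
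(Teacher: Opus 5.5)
Your proposal is correct and follows the paper's route: the paper's proof simply specializes Proposition~\ref{identitiesLaplaceone} to $\mathcal D[f]=0$. Your extra remark is also accurate: by Corollary~\ref{identitiesH}(2) the function in item 2 vanishes identically, which shows that item 2 is in fact trivial.

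The one thing to tighten is the regularity aside. Deducing harmonicity from $\Delta_{\mathbb R^4}=\overline{\mathcal D}\circ\mathcal D$ already presupposes $f\in C^2$, so the argument as written is circular. Get smoothness instead from the Cauchy formula, i.e.\ \eqref{BorelHyp} with $g=0$ applied on small balls $\overline B\subset\Omega$, or from Weyl's lemma applied distributionally.
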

\begin{proof} Use Proposition \ref{identitiesLaplaceone}
\end{proof}

\subsection*{Conclusions and future works}

We found some relationships between operators $G,\, \mathcal{D}, \, \nu, \,\Delta_{\mathbb R^4}$ and $\mathcal{T}$ gave us  information about $\textrm{Ker}(G)$ and $\textrm{Ker}( \mathcal{D})$  such as the discovery of a relationship between harmonicity with the slice regular functions and a new subfamily of hyperholomorphic functions associated with $B^4(0,1)$.

An important future work will be to describe the function sets $Ker(G)\cap Ker (\mathcal D) \cap C^1(\Omega,\mathbb H)$  for domains $\Omega\subset \mathbb H$  different {from} $\mathbb B^4(0,1)$ to complement Proposition \ref{GDkerConstant}.
        
This work opens up new avenues for further research on properties of functions presented in Corollary \ref{serieg} and Remark \ref{remsubfam}. 
{In addition, paper \cite{Stoppato} introduced a new type of series expansion for slice regular functions that can be used to find different series expansions of the hyperholomorphic functions presented in Proposition \ref{serieg} and Remark \ref{remsubfam}.}

Future research could explore the extension to higher dimensions context using Clifford algebras and proving analogous results for monogenic function theory and the slice monogenic function theory through some relationships between the global monogenic operator and the Dirac operator. 

\subsection*{Conflict of interest} No conflicts of interest are disclosed by the authors.
\subsection*{Funding} Authors would like to thank Secretar\'ia de Investigaci\'on y Posgrado. Instituto Polit\'ecnico Nacional (grant numbers IND-2026--0101, SIP20260491) for financial assistance.
\subsection*{Author contributions} J.O.G.C. conceived of the presented idea. Both J.O.G.C. and D.G.C. performed the computations. J.B.R. and B.S. supervised the project. All authors discussed the results and contributed to the final manuscript.


\begin{thebibliography}{100}
\bibitem{A1} {Alpay, D., Luna-Elizarraras, M.E., Shapiro, M., Struppa, D. \textit{Gleason's problem, rational functions and spaces of left-regular functions: the Split-Quaternioin settings}. Israel J. Math. 226, (2018), 319-349.}

\bibitem{B} {Begehr, H. \textit{Iterated integral operators in Clifford analysis}. Z. Anal. Anwendungen 18 (1999), no. 2, 361-377.}

\bibitem{cgs} {Colombo, F., Gentili, G., Sabadini, I., Struppa, D. \textit{Extension results for slice regular functions of a quaternionic variable}. Adv. Math. 222 (2009), no. 5, 1793-1808.}

\bibitem{GlobalOp} Colombo, F., Gonz\'alez-Cervantes, J. O., Sabadini, I. \textit{A nonconstant coefficients differential operator associated to slice monogenic functions}. Trans. Amer. Math. Soc., {365}, 303-318 (2013).

\bibitem{CSS} {Colombo, F., Sabadini, I.,  Struppa, D.C. \textit {Noncommutative Functional Calculus. Theory and Applications of Slice Hyperholomorphic Functions}, Birkhauser, 2011.}

\bibitem{F1} {Fueter, R. \textit{Die Funktionentheorie der Differentialgleichungen $\Delta u = 0$ und $\Delta \Delta u = 0$ mit vier reellen Variablen}. Commentarii Mathematici Helvetici, 7(1), (1934). 307-330.}

\bibitem{F2} {Fueter, R.  \textit{Über die analytische Darstellung der regulären Funktionen einer Quaternionenvariablen}. Commentarii Mathematici Helvetici, 8(1) (1935), 371-378.}

\bibitem{gssbook} {Gentili, G., Stoppato, C.,   Struppa, D.C. \textit {Regular functions of a quaternionic variable}. Springer Monographs in Mathematics, Springer, Berlin-Heidelberg (2013).}

\bibitem{GS1} {Gentili, G.,   Struppa, D.  C., \textit{A new approach to Cullen-regular functions of a quaternionic variable}. C. R. Acad. Sci. Paris, Ser. I, 342, 741-744 (2006).}

\bibitem{GS2} {Gentili, G., Struppa, D.C. \textit{A new theory of regular functions of a quaternionic variable}. Adv. Math. 216 279-301 (2007).}

\bibitem{GSLICE}{Ghiloni, R. Slice-by-slice and global smoothness of slice regular and polyanalytic functions. Annali di Matematica 201, 2549-2573 (2022).}

\bibitem{GPA}{Ghiloni, R., \& Perotti, A. Slice regular functions on real alternative algebras. Advances in Mathematics, 226(2), 1662-1691, (2011).}

\bibitem{gp} Ghiloni, R., Perotti,  A. \textit{Volume Cauchy formulas for slice functions on	real associative *-algebras}. Complex Var. Elliptic Equ., {58}, 1701-1714 (2013).

\bibitem{GP_2} {Ghiloni, R., Perotti, A. \textit{Global differential equations for slice regular functions}. Math. Nachr. 287 (2014), no. 5-6, 561--573.}

\bibitem{gpr} Ghiloni, R., Perotti, A., Recupero, V. \textit{Noncommutative Cauchy integral formula}. Complex Anal. Oper. Theory. 11 (2), (2017), 289-306.

\bibitem{G} Gonz\'alez-Cervantes, J. O. \textit{On Cauchy Integral Theorem for Quaternionic Slice Regular Functions}. Complex Anal. Oper. Theory, {13} 6, 2527-2539 (2019).

\bibitem{GG1} {Gonz\'alez-Cervantes J. O., Gonz\'alez-Campos, D. \textit{The global Borel-Pompeiu-type formula for quaternionic slice  regular  functions}. Complex Var. Elliptic Equ., 66 (5), 1-10 (2021).}

\bibitem{GG2} {Gonzalez-Cervantes, J. O., Gonz\'alez-Campos D., \textit{On the conformal mappings and the Global operator $G$}. Adv. Appl. Clifford Algebr. 31  1, (2021).}

\bibitem{KVS} {Kravchenko, V. V., Shapiro, M. V. \textit{Integral representations for spatial models of mathematical physics}. Pitman Research Notes in Mathematics Series, 351. Longman, Harlow, 1996}.

\bibitem{shapiro1} {Shapiro, M., Vasilevski, N. L. \textit{Quaternionic $\psi$-monogenic functions, singular operators and boundary value problems. I. $\psi-$hyperholomorphy function theory.}  Compl. Var. Theory Appl. {27} (1995), no. 1, 17-46.}

\bibitem{shapiro2} {Shapiro, M. V., Vasilevski, N. L. \textit{Quaternionic $\psi-$hyperholomorphic functions, singular integral operators and boundary value problems. II. Algebras of singular integral operators and Riemann type boundary value problems}. Complex Variables Theory Appl. {27}, (1995), no. 1, 67-96.}

\bibitem{Stoppato} Stoppato, C. \textit{A new series expansion for slice regular functions}. Adv. Math. 231 (2012), no. 3-4, 1401--1416.

\bibitem{sudbery} Sudbery, A. \textit{Quaternionic analysis}. Math. Proc. Cambridge Philos. Soc. 85 (1979), no. 2, 199--224. 

\bibitem{VSMT} {Vasilevski N. L., Shapiro M. V. \textit{On an analogue of monogenity in the sense of Moisil-Theodoresco and some applications in the theory of boundary value problems}. Reports of Englarged Session of Seminars of the I.N. Vekua Institute of App. Math. Tbilisi, {1} (1985). 63-66.}
\end{thebibliography}
\end{document}